\documentclass[12 pt]{article}

\usepackage{CJK}
\usepackage{amssymb}
\usepackage{bbm}
\usepackage{amsfonts}
\usepackage{mathrsfs}
\usepackage{graphicx}
\usepackage{amsmath}
\usepackage{amsthm}
\usepackage{xypic}
\usepackage{times}
\usepackage{geometry}
\usepackage{color}
\usepackage{indentfirst}
\usepackage{cases}
\usepackage{hyperref}
\usepackage{float}
\usepackage{authblk}

\hypersetup{
    colorlinks=true,
    linkcolor=blue,
    citecolor=blue
}
\newtheorem{Theorem}{\scshape  Theorem}[section]
\newtheorem{Lemma}[Theorem]{\scshape  Lemma}

\title{Graphs with maximal Laplacian eigenvalue multiplicity}

\date{}

\makeatletter
\renewenvironment{abstract}{%
    \if@twocolumn
        \section*{\abstractname}%
    \else
        \begin{center}%
            {\bfseries \abstractname\vspace{0.5mm}}%
        \end{center}%
        \quotation
    \fi}
    {\if@twocolumn\else\endquotation\fi}
\makeatother

\begin{document}
\baselineskip 17pt

\title{Graphs with maximal Laplacian eigenvalue multiplicity}

\author{
Songnian Xu$^{1}$ \thanks{Corresponding author. E-mail address: xsn131819@163.com},
Xiaoya Li$^{1}$,
Dein Wong$^{1}$ \thanks{Corresponding author. E-mail address: wongdein@163.com. Supported by the National Natural Science Foundation of China (No.~12371025)},
Wenhao Zhen$^{2}$
}

\affil{$^{1}$ Department of Mathematics, China University of Mining and Technology, Xuzhou 221116, P.R. China}
\affil{$^{2}$ School of Mathematics and Statistics, Zaozhuang University, Zaozhuang 277160, P.R. China}
\date{}
\maketitle

\vspace{-1cm}

\begin{abstract}
In this paper, \( G \) is a simple connected graph, and \( m_G(\lambda) \) denotes the multiplicity of \( \lambda \) as an eigenvalue of the Laplacian matrix \( L(G) \). Let \( p(G) \) denote the number of pendant vertices of \( G \), \( q(G) \) the number of quasi-pendant vertices of \( G \), and \( c(G) \) the dimension of the cycle space of \( G \). 

Li et al. [Discrete Mathematics, 2026] proved that if \( G \) is a tree with \( G \not\cong K_{1,n-1} \) and \( \lambda \neq 1 \), then 
\[
m_G(\lambda) \le q(T) - 1.
\]
Moreover, for a general graph \( G \) with \( \lambda \neq 1 \), Li et al. also proved in the same paper that 
\[
m_G(\lambda) \le c(G) + q(G),
\]
with equality if and only if \( G \cong K_{1,n-1} \) or \( G \cong C_n \) with \( \lambda \notin \{0, 4\} \).

A natural consequence is that if \( c(G) + q(G) \ge 2 \), then 
\[
m_G(\lambda) \le 2c(G) + q(G) - 1.
\]
When \( c(G) = 0 \), this reduces to the result of Li et al. for trees. In this paper, we give a complete characterization of graphs \( G \) attaining the equality 
\[
m_G(\lambda) = 2c(G) + q(G) - 1.
\]
\end{abstract}

\vspace{-3mm}

\let\thefootnoteorig\thefootnote
\renewcommand{\thefootnote}{\empty}

\noindent{\bf AMS classification:} 05C50
\vskip 1.5mm
\noindent{\bf Keywords:} Laplacian matrix; eigenvalue multiplicity; cyclomatic number; quasi-pendant vertices

\vspace{-2mm}

\section{Introduction}
In this paper, we only consider simple, connected finite graphs $G$.
Let $V(G)$ and $E(G)$ denote the sets of vertices and edges of the graph $G$, respectively.
The order of $G$, which refers to the number of vertices in $G$, is denoted as $|G|$.
A graph $H$ is called an induced subgraph of $G$ if any two vertices are adjacent in $H$ if and only if they are adjacent in $G$.
For $K\subseteq V(G)$, we denote the subgraph induced by $K$ as $[K]$.
We often use $G-K$ to represent the graph $[V(G) \setminus V(K)]$.
The degree of a vertex $v$ in $G$ is denoted by $d_G(v)$ , which represents the number of adjacent vertices to $v$ in $G$.
If $d_G(v) \geq 3$, we refer to $v$ as a principal vertex of $G$.
A vertex \( v \) is called a pendant vertex if \( d_G(v) = 1 \). A vertex adjacent to a pendant vertex is called a quasi-pendant vertex. Let \( p(G) \) and \( q(G) \) denote the number of pendant vertices and quasi-pendant vertices of \( G \), respectively. The cyclomatic number of \( G \) is defined as \( c(G) = |E(G)| - |V(G)| + m \), where \( m \) is the number of connected components of \( G \). Since we only consider simple connected graphs in this paper, we have \( m = 1 \).

Let \( A(G) = (a_{ij})_{n \times n} \) denote the adjacency matrix of a graph \( G \), where \( a_{ij} = 1 \) if \( v_i \sim v_j \), and \( a_{ij} = 0 \) otherwise. The Laplacian matrix of \( G \) is defined as \( L(G) = D(G) - A(G) \), where \( D(G) \) is the diagonal degree matrix with \( d_{ii} = |N_G(v_i)| \) and \( d_{ij} = 0 \) for \( i \neq j \).
Let \( m_{L(G)}(\lambda) \) denote the multiplicity of \( \lambda \) as an eigenvalue of the Laplacian matrix \( L(G) \), which we abbreviate as \( m_G(\lambda) \) when no confusion arises.

The multiplicity of Laplacian eigenvalues has attracted considerable attention in recent decades. In 1985, Faria \cite{Far} first considered the multiplicity of the Laplacian eigenvalue \( 1 \), and established the inequality
\[
m_G(1) \ge p(G) - q(G).
\]
 Subsequently, Grone et al. \cite{Gro} showed that for any tree \( T \),
\[
m_T(1) \le p(T) - 1.
\]

A number of subsequent works have been devoted to characterizing graphs that attain certain bounds on Laplacian eigenvalue multiplicities. Andrade et al. \cite{Andr} provided a partial characterization of graphs \( G \) satisfying \( m_G(1) = p(G) - q(G) \). Gupta \cite{Gup} proved that \( m_T(1) = p(T) - 1 \) for a tree \( T \) if and only if \( d(u_1, u_2) \equiv 2 \pmod{3} \) for any two distinct pendant vertices \( u_1 \) and \( u_2 \), and later in \cite{Gup1} gave a complete characterization of trees satisfying \( m_T(1) = p(T) - 2 \). In the same year, Wong et al. \cite{Wong} characterized the trees \( T \) and the Laplacian eigenvalues \( \lambda \) for which \( m_T(\lambda) = p(T) - 1 \).
Guo et al. \cite{Guo} proved that if \( T \) is a tree with \( n \) vertices, then 
\( m_T(1) \in \{0, 1, \ldots, n - 5, n - 4, n - 2\} \), and moreover, for each 
\( k \) in this set, there exists a tree \( T \) with \( n \) vertices such that \( m_T(1) = k \).
In 2025, Wang et al. \cite{Wang} proved that if \( T \) is a tree with \( n \) vertices and \( \beta'(T) \ge 2 \), then
\[
m_T(1) \le n - 2\beta'(T) - 1,
\]
where \( \beta'(T) \) is the induced matching number of \( T \), and they characterized the extremal trees attaining equality. 
Tian et al. \cite{Tian1} proved that
\[
m_T(1) = p(T) - q(T) + m_{\overline{T}}(1),
\]
where \( \overline{T} \) is the reduced tree of \( T \), obtained from \( T \) by deleting some pendant vertices such that \( p(\overline{T}) = q(\overline{T}) \). Furthermore, for every reduced tree \( T \) with \( n \ge 6 \) vertices, they proved that
\[
m_T(1) \le \frac{n-2}{4},
\]
and completely characterized the extremal trees attaining this bound.
More general upper bounds involving the cyclomatic number have also been investigated. In 2022, Wen et al. \cite{Wen} proved that if \( G \) is not a cycle, then
\[
m_G(\lambda) \le 2c(G) + p(G) - 1.
\]
 Han et al. \cite{Han} further characterized the graphs \( G \) with \( m_G(1) = 2c(G) + p(G) - 1 \).
Very recently, Li et al. \cite{Li2026} investigated upper bounds for the multiplicity of Laplacian eigenvalues of graphs. They proved that for any tree \( T \not\cong K_{1,n-1} \) with \( \lambda \neq 1 \),
\[
m_T(\lambda) \le q(T) - 1,
\]
and completely characterized the trees attaining equality. Moreover, for any connected graph \( G \), they established the general bound
\[
m_G(\lambda) \le 2c(G) + q(G) \quad (\lambda \neq 1),
\]
with equality if and only if \( G \) is either a star \( K_{1,n-1} \) or a cycle \( C_n \) with \( \lambda \notin \{0, 4\} \). 
According to the result of Li et al., we know that if \( c(G) + q(G) \ge 2 \) and \( \lambda \neq 1 \), then necessarily
\[
m_G(\lambda) \le 2c(G) + q(G) - 1.
\]
In this paper, we give a complete characterization of the extremal graphs attaining equality.

\section{Main resulas}

Let \( m \) be a positive integer. It is well known that:

(1) The Laplacian eigenvalues of \( C_g \) are
\[
2 - 2\cos\frac{2k\pi}{g}, \qquad k = 1, 2, \ldots, g.
\]

(2) The Laplacian eigenvalues of \( P_l \) are
\[
2 - 2\cos\frac{k\pi}{l}, \qquad k = 0, 1, \ldots, l-1.
\]

This implies that if there exist two coprime integers \( i \) and \( m \) with \( 1 \le i \le m \) such that
\[
\lambda = 2 - 2\cos\frac{2i\pi}{m},
\]
then:
\[
\lambda \text{ is an eigenvalue of } C_g \iff g \equiv 0 \pmod m,
\]
and
\[
\lambda \text{ is an eigenvalue of } P_l \iff 2l \equiv 0 \pmod m.
\]

$T(k,l)$: When $l\geq1$, let $T(k,l)$ be the tree on $k + l$ vertices obtained from $K_{1,k}$ by identifying its center with a
pendant vertex of $P_l$.
We call another pendant vertex of $P_l$ the $l$-vertex of $T(k,l)$.
In particular, $T(k,1) = K_{1,k}$ and $T(k,0)$ is the disjoint union of $k$ copies of $P_1$.

$C(k,l)$: When $l\geq1$, let $C(k,l)$ be the graph on $k + l-1$ vertices obtained from $C_k$ by identifying its a vertex with a
pendant vertex of $P_l$.
We call another pendant vertex of $P_l$ the $l$-vertex of $C(k,l)$.
In particular, $C(k,1) =C_k$ and $C(k,0)=P_{k-1}$.

$pendant$-$T(k,l)$ and $pendant$-$C(g,l)$:
Let $H$ be a graph, $l\geq1$ and $u \in V(H)$ such that $d_H(u) \geq 2$.
If $v$ is an $l$-vertex of $T_{k,l}$ and $G$ is the graph obtained by adding an edge between $u$ and $v$, then we refer to $T(k,l)$ as a $pendant$-$T(k,l)$ of $G$.
Similarly, if $v$ is an $l$-vertex of $C(g,l)$ and $G$ is the graph obtained by adding an edge between $u$ and $v$, then we refer to $C(g,l)$ as a $pendant$-$C(g,l)$ of $G$.

A graph $G$ is called $\lambda$-$optimal$ if $m_G(\lambda) = 2c(G) + q(G) - 1$.
Since \( c(G) = 0 \) means that \( G \) is a tree, and the tree case has been completely settled in \cite{Li2026}, in this paper we only consider the case \( c(G) \ge 1 \). 
During the proof, we observe that the characterization of \( \lambda \)-optimal graphs differs between the cases \( c(G) + q(G) = 2 \) and \( c(G) + q(G) \ge 3 \). This distinction can be seen from Theorems 2.1 and 2.2 below.

 For convenience, let \( F_{uv}(G, H; P_l) \) denote the graph obtained from two graphs \( G \) and \( H \), together with a path \( P_l : w_1 w_2 \cdots w_l \) (\( l \geq 1 \)), by identifying the vertex \( u \) of \( G \) with \( w_1 \) and the vertex \( v \) of \( H \) with \( w_l \), respectively. In the case \( l = 1 \), this graph is precisely the coalescence of \( G \) and \( H \) formed by identifying the vertex \( u \) of \( G \) with the vertex \( v \) of \( H \).

\begin{Theorem}
Let \( c(G) + q(G) = 2 \) with \( c(G) \ge 1 \) and \( \lambda \neq 1 \). Then \( G \) is \( \lambda \)-optimal if and only if one of the following holds:

(1) \( G = F_{u,w}(C, K_{1,k}; P_l) \), where \( l > 1 \), \( m_C(\lambda) = 2 \), and \( m_{T_{k,l-1}}(\lambda) = 1 \);

(2) \( G = F_{uw}(C_{g_1}, C_{g_2}; P_l) \), where \( l > 1 \), \( m_{C_{g_1}}(\lambda) = m_{C_{g_2}}(\lambda) = 2 \), and when \( l > 2 \), \( m_{P_{l-2}}(\lambda) = 1 \).
\end{Theorem}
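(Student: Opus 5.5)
Since \(c(G)\ge 1\) and \(c(G)+q(G)=2\), there are only two cases: \(c(G)=2,\ q(G)=0\) or \(c(G)=1,\ q(G)=1\). In both cases \(\lambda\)-optimality means \(m_G(\lambda)=2c(G)+q(G)-1\), i.e. \(m_G(\lambda)=3\) in the first case and \(m_G(\lambda)=2\) in the second. My main tool will be eigenvalue interlacing for principal submatrices of \(L(G)\): deleting a vertex \(v\) gives a principal submatrix \(L_v(G)\), which equals \(L(G-v)\) plus a diagonal perturbation on \(N(v)\), and
\[
m_{L_v(G)}(\lambda)\ge m_G(\lambda)-1.
\]
Alongside this I will use the explicit spectra of \(C_g\) and \(P_l\) recalled above. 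The key observation is that when \(v\) is adjacent to a single vertex \(w\) of a component, the matrix \(L_v\) restricted to that component is its Laplacian with \(+1\) added at \(w\). For a pendant path this is exactly the matrix whose spectrum is tied to \(P_{l}\)-type eigenvalues.

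\textbf{Case \(c=1,\ q=1\).} Here \(G\) is unicyclic, and all its pendant vertices hang off a single quasi-pendant vertex \(w\). Since \(G\) has a pendant vertex, \(G\) is not a cycle, so \(G\) consists of a cycle \(C\) with trees attached. Because every pendant vertex is adjacent to the same \(w\), the attached trees must form one path from a vertex \(u\) of \(C\) to \(w\), with \(k\) leaves at \(w\). This is exactly \(G=F_{u,w}(C,K_{1,k};P_l)\); the case \(l=1\) would make \(u=w\) sit on the cycle.

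I will first show \(l>1\). Then I will delete the vertex \(u\) (or the edge structure at \(u\)) and use interlacing. The components are \(C-u=P_{g-1}\) and the tree part, and I will compare with \(m_C(\lambda)\le 2\) and the tree \(T_{k,l-1}\). Applying interlacing at \(u\) gives \(m_G(\lambda)\le m_{\text{cycle side}}(\lambda)+m_{\text{tree side}}(\lambda)+1\). More precisely, I will use the standard eigenvector argument. Since \(\lambda\neq1\), any \(\lambda\)-eigenvector of \(G\) takes equal values on the leaves at \(w\), and the dimension count forces \(m_C(\lambda)=2\) and \(m_{T_{k,l-1}}(\lambda)=1\), with \(\lambda\) being an eigenvalue of \(C\) whose eigenspace contains a vector vanishing at \(u\).

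For the converse, I will construct two independent eigenvectors. The first is a \(C\)-eigenvector vanishing at \(u\), extended by \(0\). The second comes from gluing an eigenvector of the tree side with a suitable \(C\)-eigenvector, matching the value at \(u\). The exact gluing conditions, including the role of \(T_{k,l-1}\) versus the pendant-modified matrix, need to be checked carefully against the definitions.

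\textbf{Case \(c=2,\ q=0\).} Here \(G\) has no pendant vertices, so \(G\) is its 2-core. It is therefore either a theta graph or two cycles joined by a path, possibly sharing a vertex. To reach \(m_G(\lambda)=3\), I will argue with interlacing after deleting a vertex. For a theta graph, deleting a branch vertex of degree 3 leaves a tree made of three paths hanging from the other branch vertex, and the multiplicity there can be shown to be at most \(1\). Interlacing then gives \(m_G(\lambda)\le 2\), so the theta graph is excluded. For the dumbbell with \(l=1\) (shared vertex \(u\)), deleting \(u\) leaves two paths, giving \(m\le 1+1+1\). I will rule out equality by looking at the eigenvectors: each path's eigenvector has nonzero endpoints, which forces incompatible conditions at \(u\).

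For \(l\ge2\), I will delete both endpoints of the connecting path and use two-step interlacing. This reduces to paths \(P_{g_1-1}\), \(P_{g_2-1}\) and \(P_{l-2}\) with boundary perturbations. Equality then forces \(m_{C_{g_i}}(\lambda)=2\) and \(m_{P_{l-2}}(\lambda)=1\) when \(l>2\). Sufficiency again follows by explicitly exhibiting three independent eigenvectors: one from each cycle vanishing at its attachment vertex, and one global vector built through the path.

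\textbf{Expected obstacle.} The main difficulty will be the precise eigenvector bookkeeping at the attachment vertices, that is, showing that multiplicity \(3\) (respectively \(2\)) forces exactly the stated spectral conditions. This includes excluding the theta graph and the \(l=1\) cases. I would handle it with the explicit cosine formulas for eigenvector entries of paths and cycles.
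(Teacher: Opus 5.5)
\textbf{There is a genuine gap, on the necessity side.} Your sufficiency constructions are sound. Extending by zero a $\lambda$-eigenvector of a cycle that vanishes at the attachment vertex, and gluing eigenvectors of the pieces so that the values agree across the removed edges, is the hands-on form of the paper's edge interlacing $m_G(\lambda)\ge m_{G-e}(\lambda)-1$. The problem is necessity, which you base on vertex-deletion interlacing. That looks at the wrong matrices, and for theta graphs it rests on a false bound.

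\emph{Theta graphs.} Take $G=K_{2,3}$, a theta graph with three branches of length $2$, and $\lambda=2$. Delete a branch vertex $a$. The matrix $L_a(G)-2I$ has rows $(1,-1,-1,-1)$ and three copies of $(-1,0,0,0)$. So $2$ is a double eigenvalue of $L_a(G)$, and interlacing gives only $m_G(2)\le 3$, which excludes nothing.

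\emph{Dumbbell with $l\ge 2$.} Deleting $u$ and $w$ yields only $m_{L_{\{u,w\}}(G)}(\lambda)\ge m_G(\lambda)-2=1$, which forces none of the three conditions. Moreover, the middle block is $U_{l-2}=2I-A(P_{l-2})$, with eigenvalues $2-2\cos\frac{k\pi}{l-1}$; it is not $L(P_{l-2})$. In every extremal graph with $l>2$ one has $\lambda\notin\sigma(U_{l-2})$: writing $\lambda=2-2\cos\frac{2i\pi}{m}$ with $\gcd(i,m)=1$, both memberships together would force $m\mid 2$.

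\emph{Unicyclic case.} The same mismatch occurs: the tree block of $L_u(G)$ is $B_{k,l-1}$, not $L(T_{k,l-1})$. The natural dimension count takes an eigenvector vanishing at $w$, hence on the whole tree and at $u$. This gives $\lambda\in\sigma(U_{g-1})$, so $\lambda\ne 0,4$ and $m_C(\lambda)=2$. It gives neither $l>1$ nor $m_{T_{k,l-1}}(\lambda)=1$.

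\textbf{What is missing.} In the unicyclic case you need an argument that sees the edge $e=uw_2$. The paper computes an eigenvector of $L_{v_1}(G)$ explicitly. Alternatively, write $L(G)=L(G-e)+bb^{T}$ with $b=e_u-e_{w_2}$, and suppose $\lambda\notin\sigma(L(T_{k,l-1}))$.
\begin{itemize}
\item An eigenvector with $x_u\ne x_{w_2}$ would put $b$ in the range of $L(G-e)-\lambda I$. That would mean every $\lambda$-eigenvector of $C$ vanishes at $u$, which is false.
\item So every eigenvector lies in the one-dimensional space of $C$-eigenvectors vanishing at $u$, extended by zero. Hence $m_G(\lambda)\le 1$.
\item For $l=1$ the same rank-one reasoning applies to $L(C)-\frac{k\lambda}{1-\lambda}e_ue_u^{T}$, using $\lambda\ne 0$.
\end{itemize}

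In the bicyclic case, the missing idea is to reduce to case (1) by deleting one cycle edge $e$ at a vertex of degree $3$. Then $m_{G-e}(\lambda)\ge m_G(\lambda)-1=2$. So $G-e$ is either a $\lambda$-optimal unicyclic graph with one quasi-pendant vertex, or a cycle with $\lambda$ double. Case (1) then gives $m\mid g$ for the surviving cycle $C_g$, and $m\mid 2s$ for its pendant path on $s$ vertices.
\begin{itemize}
\item For $l=1$, the two possible deletions give $m\mid g_2$ and $m\mid 2(g_2-1)$, so $m\mid 2$.
\item For a theta graph, $m$ divides all three cycle lengths, hence $2t$ for a branch with $t\ge 2$ edges. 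Cutting that branch creates a pendant path of $t-1$ vertices, which forces $m\mid 2(t-1)$, or violates $l>1$ when $t=2$. Either way $m\mid 2$.
\item In both cases above, $m\mid 2$ means $\lambda\in\{0,4\}$, a contradiction.
\item For $l\ge 2$, one gets $m\mid g_2$ and $m\mid 2(g_2+l-2)$, hence $m\mid 2(l-2)$. This is exactly the condition $m_{P_{l-2}}(\lambda)=1$.
\end{itemize}
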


Denote by \( L_v(G) \) the submatrix of \( L(G) \) obtained by removing the row and column corresponding to the vertex \( v \). Let \( B_{k,l} \) be the matrix of order \( k+l \) obtained from \( L(T(k,l+1)) \) by deleting the row and column corresponding to the \((l+1)\)-vertex of \( T(k,l+1) \). 
Let \( Q_{g,l} \) be the square matrix of size \( g+l \) obtained from \( L(C(g,l+1)) \) by deleting the row and column corresponding to the \((l+1)\)-vertex of \( C(k,l+1) \). 
Let \( U_n \) be the matrix of order \( n \) obtained from \( L(P_{n+2}) \) by deleting the rows and columns corresponding to both end vertices of \( P_{n+2} \).

In the proof of Section 3, we will find that if \( c(G) + q(G) \ge 3 \), \( \lambda \neq 1 \), and \( G \) is \( \lambda \)-optimal, then every \( C(k,l) \) of \( G \) is a pendant-\( C(k,l) \), and every \( T(k,l) \) is a pendant-\( T(k,l) \) (see Lemma 3.9 for the precise result). To better present our conclusions, we assume henceforth that the conclusion of Lemma 3.9 is known.
For a graph \( G \), define \( \Gamma(G) \) as the set of vertices of degree at least 3 in \( G \), except principal vertices on cycles and quasi-pendant vertices.
In the following discussion, for a graph \( G \) with \( c(G) + q(G) \ge 3 \), we let
\[
G - \Gamma(G) = \mathcal{C}_G \cup \mathcal{T}_G \cup \mathcal{P}_G,
\]
where \( \mathcal{C}_G \) is the union of \( C(g_m, h_m) \) for \( 1 \le m \le c(G) \), \( \mathcal{T}_G \) is the union of \( T(k_i, l_i) \) for \( 1 \le i \le q(G) \), and \( \mathcal{P}_G \) is the union of \( P_{t_j} \) for \( 1 \le j \le |\Gamma(G)| - 1 \).

\begin{Theorem}
    
Let \( G \) be a graph with \( c(G) \ge 1 \), \( \lambda \neq 1 \), and \( c(G) + q(G) \ge 3 \). Suppose
\[
G - \Gamma(G) = \mathcal{C}_G \cup \mathcal{T}_G \cup \mathcal{P}_G.
\]
Then \( G \) is \( \lambda \)-optimal if and only if
\[
\begin{aligned}
&m_{Q_{g_m,h_m}}(\lambda) = 2 &&(1 \le m \le c(G)), \\
&m_{B_{k_i,l_i}}(\lambda) = 1 &&(1 \le i \le q(G)), \\
&m_{U_{t_j}}(\lambda) = 1 &&(1 \le j \le |\Gamma(G)| - 1).
\end{aligned}
\]
\end{Theorem}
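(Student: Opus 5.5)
The plan is to reduce everything to interlacing for principal submatrices combined with a block-diagonal decomposition. The basic tool is the fact that deleting the row and column of one vertex from a Hermitian matrix changes the multiplicity of $\lambda$ by at most one. Deleting a set $S$ of $s$ vertices therefore gives
\[
m_G(\lambda)\le m_{L_S(G)}(\lambda)+|S|,
\]
where $L_S(G)$ is the principal submatrix of $L(G)$ on $V(G)\setminus S$.

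Take $S=\Gamma(G)$. By the assumed conclusion of Lemma 3.9, every $C(g_m,h_m)$ and every $T(k_i,l_i)$ is pendant, so each is attached to the rest of $G$ through its $l$-vertex by a single edge to a vertex of $\Gamma(G)$. The components $P_{t_j}$ are internal paths joining two vertices of $\Gamma(G)$. Hence $L_S(G)$ is block diagonal with blocks $Q_{g_m,h_m}$, $B_{k_i,l_i}$ and $U_{t_j}$: deleting a neighbour in $\Gamma(G)$ leaves the degree $+1$ on the attaching vertex, which is exactly how these matrices were defined. This gives
\[
m_G(\lambda)\le \sum_m m_{Q_{g_m,h_m}}(\lambda)+\sum_i m_{B_{k_i,l_i}}(\lambda)+\sum_j m_{U_{t_j}}(\lambda)+|\Gamma(G)|.
\]

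Next I bound each block. $U_t$ is a tridiagonal Jacobi matrix, so $m\le 1$. $B_{k,l}$ has the $k$ leaves as twins, which contribute the eigenvalue $1\ne\lambda$; after removing them by an equitable quotient, what remains is essentially tridiagonal, so $m\le 1$. $Q_{g,l}$ is a cycle with a path hanging off it, and deleting one cycle vertex makes it tridiagonal, so $m\le 2$. Counting the pieces, $|\Gamma(G)|$ together with the components relates to $c(G)$ and $q(G)$ via the edge count (the tree-like skeleton on $\Gamma(G)$ has $|\Gamma(G)|-1$ paths). This bound alone does not reach $2c+q-1$: we need $\sum m\le 2c+q+(|\Gamma|-1)-|\Gamma|$ after the "+|S|" loss is absorbed. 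So the sharper argument has to be that for a vertex $v\in\Gamma(G)$, each adjacent block whose matrix has $\lambda$ as eigenvalue with eigenvector nonzero at the attaching vertex forces the deletion of $v$ to \emph{decrease} the multiplicity rather than increase it. This is the standard "downer vertex" argument, in the style of Parter–Wiener. Concretely, I would prove $m_G(\lambda)=\sum(\text{block multiplicities})-|\Gamma(G)|+\text{(correction)}$ by induction on $|\Gamma(G)|$, peeling off a vertex of $\Gamma(G)$ that is a leaf of the skeleton tree. Using this formula, the maximal value $2c+q-1$ is attained exactly when every block attains its maximum: $2$ for each $Q$, $1$ for each $B$, and $1$ for each $U$. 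Those are the stated conditions.

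For sufficiency, assume the block conditions. I construct $2c+q-1$ independent eigenvectors directly. Each $Q_{g_m,h_m}$ block with $m=2$ has a $2$-dimensional eigenspace of $L(C(g_m,h_m+1))$ type, and I extend it by zero at the attaching vertex; one combination vanishes on the attaching vertex and gives a global eigenvector. The one-dimensional eigenvectors of the $B$ and $U$ blocks are combined along each $\Gamma$-vertex so that the eigen-equation at that vertex balances, which imposes one linear condition per $\Gamma$-vertex. A dimension count then gives $2c+q+(|\Gamma|-1)-|\Gamma|=2c+q-1$ solutions.

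The main obstacle is the necessity direction. There I must show that if some block falls short of its maximum, the multiplicity drops strictly below $2c+q-1$. This needs careful control of whether block eigenvectors vanish at attaching vertices, i.e. that $\lambda$ is not simultaneously an eigenvalue of the block with its last vertex deleted. I would handle this through the tridiagonal structure: consecutive leading minors of a Jacobi matrix cannot both vanish.
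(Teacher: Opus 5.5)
Your argument for the direction ``block conditions $\Rightarrow$ $\lambda$-optimal'' is correct and is the paper's argument unpacked. The paper (which, confusingly, calls this direction ``necessity'') simply writes $m_G(\lambda)\ge m_{L_{\Gamma(G)}}(\lambda)-|\Gamma(G)|=2c(G)+q(G)-1$. Your construction, with block eigenvectors vanishing on $\Gamma(G)$ and one balancing equation per $\Gamma$-vertex, is exactly the proof of that interlacing inequality. To get equality you still need to quote the upper bound $m_G(\lambda)\le 2c(G)+q(G)-1$ of Li et al.

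The converse, $\lambda$-optimal $\Rightarrow$ every block is extremal, is the real content of the theorem, and your proposal does not prove it. The identity $m_G(\lambda)=\sum(\text{block multiplicities})-|\Gamma(G)|+(\text{correction})$ says nothing until the correction is bounded. The claim that the maximum is attained exactly when every block is at its maximum does not follow from it. If a block at $v\in\Gamma(G)$ loses $\lambda$ (or its eigenvectors vanish at the attaching vertex), then $v$ may stop being a Parter vertex, its balancing equation may become vacuous, and eigenvectors of $L(G)$ may be nonzero at $v$. So the correction grows and partly offsets the loss, and showing the net effect is always a strict loss is precisely what has to be done. (You also have the roles reversed in words: under the block conditions, deleting $v\in\Gamma(G)$ \emph{increases} the multiplicity; the downer is the attaching vertex inside the adjacent block.)

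Your induction by deleting a leaf $v$ of the skeleton does not close as stated either. The remaining component of $L_v(G)$ is $L(G')$ plus $1$ on one diagonal entry, not a graph Laplacian, so neither your induction hypothesis nor the bound $2c+q-1$ applies to it.

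The missing idea is heredity of optimality (Lemma 3.8). Removing a pendant star or pendant cycle together with its connecting path gives a genuine graph $G_1$ with $m_{G_1}(\lambda)\ge m_G(\lambda)-1$ (resp.\ $-2$) by Lemma 3.2. The global bound for $G_1$ then forces $G_1$ to be $\lambda$-optimal. Inducting, and applying the hypothesis to two different deletions, covers every block. For an internal path $P_t$ that gets merged into a pendant tree, $m_{U_t}(\lambda)=1$ is recovered from Lemma 3.3 (the eigenvector of $B_{k,l}$ is nonzero at the attaching vertex) together with the Parter--Wiener type Lemma 3.5. The base cases with a single $\Gamma$-vertex use Lemma 3.4 in the same way. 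Your Jacobi-minor remark would supply Lemma 3.3 and the analogous fact for $U_t$. However, $Q_{g,l}$ is not tridiagonal, so Lemma 3.4 needs a separate argument such as the paper's recursion.
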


The following theorem gives equivalent conditions for
\[
m_{Q_{g_m,h_m}}(\lambda) = 2 \quad (1 \le m \le c(G)).
\]

\begin{Theorem}
Let \( G \) be a graph with \( c(G) \ge 1 \), \( \lambda \neq 1 \), and \( c(G) + q(G) \ge 3 \). Suppose
\[
G - \Gamma(G) = \mathcal{C}_G \cup \mathcal{T}_G \cup \mathcal{P}_G.
\]
Then \( G \) is \( \lambda \)-optimal if and only if
\[
\begin{aligned}
&m_{C_{g_m}}(\lambda) = 2,\ l_m > 2,\ \text{and } m_{L_v(P_{l_m})}(\lambda) = 1 &&(1 \le m \le c(G)), \\
&\quad \text{where } v \text{ is an arbitrary pendant vertex of } P_{l_m}, \\
&m_{B_{k_i,l_i}}(\lambda) = 1 &&(1 \le i \le q(G)), \\
&m_{U_{t_j}}(\lambda) = 1 &&(1 \le j \le |\Gamma(G)| - 1).
\end{aligned}
\]
\end{Theorem}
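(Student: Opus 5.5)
Since Theorem 2.2 already characterizes $\lambda$-optimality via the three families of conditions, and the conditions on $B_{k_i,l_i}$ and $U_{t_j}$ are identical in both statements, the proof reduces to one local equivalence. For fixed $g=g_m$ and $h=h_m$ we must show
\[
m_{Q_{g,h}}(\lambda)=2 \iff m_{C_g}(\lambda)=2,\ h+1>2,\ m_{L_v(P_{h+1})}(\lambda)=1 ,
\]
with the convention $l_m=h_m+1$. Here $L_v(P_{l})$ is the Laplacian of $P_l$ with one end row and column removed. We then invoke Theorem 2.2. Note that $Q_{g,h}$ is the Laplacian of $C(g,h+1)$ with the far end vertex deleted. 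So $Q_{g,h}$ consists of $L(C_g)$ with $1$ added at the attaching vertex $w$, plus a path block $M$ of order $h-1$ (the remaining $h-1$ path vertices). The last vertex of this path block also carries the extra $+1$ from the deleted neighbour. In fact $M$ is exactly $L_v(P_{h})$ up to orientation, i.e. $U$-type at one end and Laplacian at the other, and the path block is joined to $w$ by a single edge.

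\textbf{Upper bound and the cycle part.} By Cauchy interlacing, deleting $w$ from $Q_{g,h}$ gives $L_w(C_g)\oplus M'$, where $M'$ is the path block with both ends boosted, i.e. $U_{h-1}$. Also $L_w(C_g)=U_{g-1}$, whose eigenvalues $2-2\cos\frac{k\pi}{g}$ are simple. Hence $m_{Q}(\lambda)\le 1+m_{U_{g-1}}(\lambda)+m_{U_{h-1}}(\lambda)\le 3$. This alone is not sharp enough, so I would instead argue with eigenvectors directly.

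\textbf{Eigenvector analysis.} Let $x$ be a $\lambda$-eigenvector of $Q$. Along the path, $x$ is determined by $x_w$ via the three-term recurrence started from the free (deleted-neighbour) end. So the path coordinates are a fixed vector $\phi$ scaled by a single parameter, and $x_w=\alpha\phi_w$ for some $\alpha$. Eliminating the path (a Schur-complement/transfer argument) turns the eigen-equation into $(L(C_g)-\lambda I)x_C = \beta e_w$, where $\beta$ depends linearly on $x_w$.

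We then split into cases.
\begin{itemize}
\item If $\phi_w= 0$, then $\lambda$ is an eigenvalue of $U_{h-1}$, which is the degenerate case.
\item Otherwise the kernel dimension equals $\dim\{x_C: (L(C_g)-\lambda)x_C\in \mathrm{span}(e_w),\ \text{compatibility}\}$.
\end{itemize}
Since eigenvalues of $L(C_g)$ have multiplicity $\le 2$, we get dimension $2$ exactly when $m_{C_g}(\lambda)=2$ together with one of two options: the compatibility forces $\beta=0$ and $x_C$ is arbitrary in the eigenspace, or the coupling vanishes. Compatibility means that the path recurrence started at the free end gives a consistent value at $w$. This is precisely the requirement that $\lambda$ be an eigenvalue of the one-end-boosted path matrix $L_v(P_{l_m})$ (with $l_m=h+1$, counting $w$). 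Thus $m_{L_v(P_{l_m})}(\lambda)=1$, and this multiplicity is automatically at most $1$ since such tridiagonal matrices have simple spectrum. Conversely, given these conditions, I would build two independent eigenvectors. Take the two-dimensional $C_g$ eigenspace and use the path solution to extend each vector. For the extension to be consistent at $w$, it suffices to choose a basis for which the value at $w$ matches the scaled path vector; the $C_g$ eigenspace contains vectors with any prescribed $x_w$ (including $0$), so this is possible.

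\textbf{Main obstacle.} The delicate part is the condition $l_m>2$ together with the degenerate cases. When $h=1$ the path block is empty, and the boosted cycle $L(C_g)+e_we_w^T$ has simple-ish spectrum that cannot reach multiplicity $2$; this must be checked explicitly, e.g. by interlacing against $U_{g-1}$. Separately, when $\phi_w=0$ or $m_{C_g}(\lambda)<2$, one must show that multiplicity $2$ fails. I would handle these by the interlacing bound above combined with the fact that an eigenvector vanishing at $w$ restricts to eigenvectors of $U_{g-1}$ and $U_{h-1}$, each of which has simple spectrum. A careful count then rules out the spurious cases, provided one also verifies that $\lambda$ cannot simultaneously be an eigenvalue of $U_{g-1}$ and of $L(C_g)$ with multiplicity $2$ in a way that creates extra kernel.
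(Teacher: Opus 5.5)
Your reduction is the same as the paper's: Theorem 2.3 is Theorem 2.2 plus the local statement about $Q_{g,h}$, which is the paper's Lemma 4.2. The gap is that the local equivalence you set out to prove is false under your convention $l_m=h_m+1$, because the compatibility condition is off by one.

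Label the path of $C(g,h+1)$ as $w=p_1,p_2,\dots,p_h$ (with $p_{h+1}$ deleted) and write $2-\lambda=2\cos\theta$. The cycle part of an eigenvector of $Q_{g,h}$ satisfies $(L(C_g)-\lambda I)x_C=(x_{p_2}-x_w)e_w$. When $m_{C_g}(\lambda)=2$, the eigenvector $(\cos j\theta)_j$ of $C_g$ equals $1$ at $w$, so solvability forces $x_{p_2}=x_w$. Substituting this into the equation at $p_2$ gives $(1-\lambda)x_{p_2}=x_{p_3}$. So the restriction to the $h-1$ vertices $p_2,\dots,p_h$ is an eigenvector of $L_v(P_h)$, not of $L_v(P_{h+1})$. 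Equivalently, the condition is $\cos\frac{(2h-1)\theta}{2}=0$, not $\cos\frac{(2h+1)\theta}{2}=0$. ``Counting $w$'' would mean $(1-\lambda)x_w=x_{p_2}$, which is not what arises. Relatedly, your block $M$ is $U_{h-1}$, not $L_v(P_h)$, because $p_2$ has degree $2$.

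Concretely, take $g=10$, $\lambda=2-2\cos\frac{\pi}{5}=\frac{3-\sqrt{5}}{2}$ and $\varphi=2\cos\frac{\pi}{5}$. Then $m_{Q_{10,3}}(\lambda)=2$: besides the sine vector supported on the cycle, there is the eigenvector with $x_{p_3}=1$, $x_{p_2}=x_w=\varphi$ and $x_{c_j}=\varphi\cos\frac{j\pi}{5}$. Yet $\lambda\notin\sigma(L_v(P_4))$. Conversely, $\lambda\in\sigma(L_v(P_3))$ but $m_{Q_{10,2}}(\lambda)=1$.

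At the level of the theorem, join the path ends of three copies of $C(10,3)$ to a new vertex $z$. This gives $m_{L_z(G)}(\lambda)=6$, hence $m_G(\lambda)\ge 5=2c(G)+q(G)-1$, so $G$ is $\lambda$-optimal but violates your condition. Three copies of $C(10,2)$ satisfy your condition but have $m_G(\lambda)\le 3+1<5$. The statement's $l_m$ must be read as $h_m$, which is how Lemma 4.2 pairs $Q_{g,l}$ with $P_l$.

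With the index corrected, your eigenvector analysis does give a proof. However, the vague case split on $\phi_w$ and the closing interlacing remarks should be replaced by an exact count. Let $t=x_{p_h}$, the value next to the deleted vertex (for $h=1$ this is $x_w$). The map $x\mapsto t$ on the $\lambda$-eigenspace of $Q_{g,h}$ is linear. Its kernel consists of the $\lambda$-eigenvectors of $L(C_g)$ vanishing at $w$, extended by zero. This kernel has dimension $1$ if $m_{C_g}(\lambda)=2$ and $0$ otherwise, since for $\lambda\in\{0,4\}$ the eigenvector of $C_g$ has no zero entry.

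So $m_{Q_{g,h}}(\lambda)=2$ if and only if $m_{C_g}(\lambda)=2$ and some eigenvector has $t\neq 0$. By the solvability argument above, this happens if and only if $\lambda\in\sigma(L_v(P_h))$; your extension argument supplies the converse. The condition $h>2$ then comes for free. For $h=1$ the coupling term is $-t\neq 0$. For $h=2$ the equation at $p_2$ becomes $(1-\lambda)t=0$, forcing $\lambda=1$.
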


\section{Preliminaries}

\begin{Lemma}[{\cite[Theorem 3.2]{Mo}}]
\label{lem:3.1}
Let \( G \) be a graph with \( n \) vertices and let \( e \) be an edge of \( G \). Then
\[
\lambda_1(G) \ge \lambda_1(G-e) \ge \lambda_2(G) \ge \lambda_2(G-e) \ge \cdots \ge \lambda_n(G) \ge \lambda_n(G-e).
\]
\end{Lemma}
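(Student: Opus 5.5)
The plan is to view the deletion of an edge as a rank-one positive semidefinite perturbation of the Laplacian and then apply the Courant--Fischer min-max characterization. Let \( e = uv \), and let \( b \in \mathbb{R}^n \) be the vector with \( b_u = 1 \), \( b_v = -1 \) and all other entries \( 0 \). Directly from the definition \( L = D - A \), removing \( e \) lowers the diagonal entries at \( u \) and \( v \) by one and raises the off-diagonal entries \( (u,v) \) and \( (v,u) \) from \( -1 \) to \( 0 \). Therefore
\[
L(G) = L(G-e) + bb^{T}.
\]
Both matrices are real symmetric of order \( n \), with eigenvalues ordered nonincreasingly as \( \lambda_1 \ge \cdots \ge \lambda_n \). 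Since \( bb^T \succeq 0 \), the quadratic forms satisfy \( x^TL(G)x \ge x^TL(G-e)x \) for every \( x \).

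\textbf{Step 1: \( \lambda_i(G) \ge \lambda_i(G-e) \).} Use
\[
\lambda_i(M) = \max_{\dim S = i}\ \min_{0 \ne x \in S} \frac{x^TMx}{x^Tx}.
\]
Because the Rayleigh quotient of \( L(G) \) dominates that of \( L(G-e) \) pointwise, the max-min for \( L(G) \) is at least that for \( L(G-e) \), for every \( i \).

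\textbf{Step 2: \( \lambda_{i+1}(G) \le \lambda_i(G-e) \) for \( 1 \le i \le n-1 \).} Restrict to subspaces orthogonal to \( b \), where the two quadratic forms coincide. Take any subspace \( S \) with \( \dim S = i+1 \). Then \( S \cap b^{\perp} \) has dimension at least \( i \). On \( b^\perp \) the Rayleigh quotients of \( L(G) \) and \( L(G-e) \) agree, so
\[
\min_{x\in S}\frac{x^TL(G)x}{x^Tx} \le \min_{x \in S\cap b^\perp}\frac{x^TL(G-e)x}{x^Tx} \le \lambda_i(G-e),
\]
where the last inequality is the max-min characterization applied to an \( i \)-dimensional subspace of \( S\cap b^\perp \). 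Maximizing over \( S \) gives \( \lambda_{i+1}(G) \le \lambda_i(G-e) \).

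Chaining Steps 1 and 2 yields the full interlacing sequence in the statement. The only point requiring care is the dimension count \( \dim(S\cap b^\perp) \ge i \) in Step 2, together with choosing the correct form of Courant--Fischer (max-min rather than min-max) for the nonincreasing ordering. Everything else follows from the rank-one identity above.
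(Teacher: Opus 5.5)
Your proof is correct and complete: the identity \(L(G)=L(G-e)+bb^{T}\) (with \(b_u=1\), \(b_v=-1\), zeros elsewhere) together with pointwise domination of Rayleigh quotients gives \(\lambda_i(G)\ge\lambda_i(G-e)\), and the count \(\dim(S\cap b^{\perp})\ge i\) gives \(\lambda_{i+1}(G)\le\lambda_i(G-e)\); chaining these is exactly the stated interlacing. The paper does not prove this lemma itself but cites Mohar, and your rank-one perturbation plus Courant--Fischer argument is the standard proof of this result, so there is nothing to add.
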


 \begin{Lemma}
\label{lem:3.2}
Assume \( G = F_{uv}(H_1, H_2; P_l) \). Then the following bounds hold:

(1)\cite[Lemma 2.6]{Li2026} If \( H_1 = K_{1,k} \), then \( m_G(\lambda) \le m_{H_2}(\lambda) + 1 \).

(2)\cite[Lemma 2.8]{Li2026} If \( H_1 = C_g \), then \( m_G(\lambda) \le  m_{H_2}(\lambda) + 2 \).
\end{Lemma}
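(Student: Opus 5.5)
The plan is to prove the two parts of Lemma~\ref{lem:3.2} separately. In each case we compare $L(G)$ with a matrix whose spectrum is controlled by that of $L(H_2)$, using interlacing for principal submatrices (Cauchy interlacing) together with Lemma~\ref{lem:3.1}.

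\textbf{Part (1).} Write $G = F_{uv}(K_{1,k}, H_2; P_l)$.
\begin{itemize}
\item Let $c$ be the centre of the star and identify it with $u$ (if $u$ is a leaf, the argument is the same with a longer path).
\item Take $W = V(G)\setminus V(H_2)$. This is the set of vertices of the star together with the path, excluding $w_l = v$.
\item Deleting the edge joining $W$ to $v$ (namely $w_{l-1}v$, or $cv$ when $l=2$) leaves the disjoint union of $H_2$ and a tree $T$. By Lemma~\ref{lem:3.1}, $m_G(\lambda) \le m_{G-e}(\lambda)+1 = m_{H_2}(\lambda)+m_T(\lambda)+1$.
\item This alone is too weak when $m_T(\lambda)\ge 1$, so instead we argue with a principal submatrix. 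Delete the rows and columns of $k-1$ of the pendant vertices of the star. Each such deletion lowers the multiplicity by at most one; but because the pendant vertices of a star are twins, the eigenvectors for eigenvalue $1$ separate. Since $\lambda\neq 1$, we use the Faria-type observation instead: for $\lambda\neq 1$, a $\lambda$-eigenvector takes the same value on all $k$ leaves, since $x_{\text{leaf}} = x_c/(1-\lambda)$.
\item Hence the eigenspace of $L(G)$ for $\lambda$ is determined by its restriction to $V(H_2)\cup\{c\}$.
\item Propagate along the path: the eigen-equations at the leaves, at $c$, and at $w_2,\dots,w_{l-1}$ determine $x$ on $W$ from the single value $x_c$ by a linear recursion. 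So the eigenspace embeds into the space of pairs (value $x_c$, restriction $y$ to $H_2$).
\item The equations at vertices of $H_2$ other than $v$ say $(L(H_2)-\lambda I)y$ vanishes off $v$. So $y$ lies in a space of dimension at most $m_{H_2}(\lambda)+1$: the kernel plus one extra direction from the value at $v$.
\item If $x_c=0$, then the recursion forces $x\equiv 0$ on $W$, so $y\in\ker(L(H_2)-\lambda I)$.
\item The one remaining parameter $x_c$ is then absorbed by the coupling at $v$. A careful count gives $\dim \le m_{H_2}(\lambda)+1$: the map to $y$ has kernel of dimension $\le 1$ (vectors with $y=0$ are determined by $x_c$), and the image lies in $\ker(L(H_2)-\lambda I)$ exactly when the path contribution at $v$ vanishes.
\end{itemize}

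\textbf{Part (2).} Write $G = F_{uv}(C_g, H_2; P_l)$ and use the same kernel-counting approach. On the cycle $C_g$, the eigen-equations at the $g-1$ cycle vertices other than $u$ form a second-order recursion. This determines $x$ on the cycle from two free parameters, for example the values at the two cycle neighbours of $u$ together with $x_u$ tied by the recursion.

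The path then transmits the values to $v$. The map $x\mapsto y=x|_{H_2}$ has kernel of dimension at most $2$: vectors vanishing on $H_2$ are supported on the cycle plus path and are determined by two cycle parameters. The image consists of vectors $y$ with $(L(H_2)-\lambda I)y$ supported on $v$ and proportional to the flow from the path.

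Rank–nullity then needs the image to have dimension at most $m_{H_2}(\lambda)$ plus whatever the kernel leaves unused. So the precise bookkeeping is:
$$\dim \le \dim(\text{kernel}) + \dim(\text{image}\cap \ker(L(H_2)-\lambda I)) + [\text{one extra direction}].$$
We must show that the extra direction and a full $2$-dimensional kernel cannot coexist.

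\textbf{Main obstacle.} The hardest step is exactly this coexistence argument. A $2$-dimensional kernel forces $x_w=0$ along the path near $v$, with $\lambda$ a double cycle eigenvalue vanishing at $u$. Then the path equations force the flow into $v$ to be zero for every eigenvector, which kills the extra direction. For the star in part (1) the analogous statement is that a $1$-dimensional kernel forces $x_c$'s propagated value at $v$ and at $w_{l-1}$ to both vanish, so the extra direction disappears.
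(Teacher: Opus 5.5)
The paper gives no proof of this lemma; both parts are imported from Li et al. Your proof is correct in substance but takes a detour compared with the natural argument, which is the $\mathbb{V}^{\lambda}\cap\mathbb{Z}^{X}$ dimension count the paper itself uses in Lemmas 3.4 and 3.8.

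You apply rank--nullity to the restriction map $x\mapsto y=x|_{V(H_2)}$ on $\mathbb{V}=\mathbb{V}^{\lambda}_{L(G)}$. You bound the image by $\dim M^{-1}(\mathrm{span}\,e_v)\le m_{H_2}(\lambda)+1$, with $M=L(H_2)-\lambda I$, and are then pushed into a case analysis on the kernel $K$ of that map. It is shorter to apply rank--nullity to an evaluation map instead.

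In (1), suppose $x\in\mathbb{V}$ and $x_u=0$. Every leaf has value $x_u/(1-\lambda)=0$, and the equation at $u$ followed by the path equations gives $x_{w_2}=\cdots=x_{w_l}=x_v=0$ (trivial if $l=1$). The equation at $v$ then gives $\sum_{z\in N_{H_2}(v)}x_z=0$, so $x|_{V(H_2)}$ is a $\lambda$-eigenvector of $L(H_2)$. Restriction is injective on $\mathbb{V}\cap\mathbb{Z}^{\{u\}}$, and this subspace has codimension at most $1$ in $\mathbb{V}$.

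In (2), impose $x_u=x_a=0$ for a cycle neighbour $a$ of $u$. The equations at the degree-$2$ cycle vertices spread the zeros around the cycle, the same path argument finishes, and the codimension is at most $2$. In fact your bullet ``if $x_c=0$ \dots\ $y\in\ker(L(H_2)-\lambda I)$'', combined with rank--nullity for $x\mapsto x_c$, is already the whole proof of (1).

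Within your own framework the ``main obstacle'' is vacuous. If $y=0$ then $x_v=0$, the equation at $v$ gives $x_{w_{l-1}}=0$, and backward propagation along the path gives $x_u=0$ (directly if $l\le 2$). Hence $K=0$ in (1). In (2) every vector of $K$ is determined by $x_a$ alone, so $\dim K\le 1$. Then $\dim\mathbb{V}\le\dim K+m_{H_2}(\lambda)+1$ gives both bounds at once.

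Your treatment of a $2$-dimensional $K$ is logically sound once you say why the flow into $v$ vanishes for every eigenvector. A $2$-dimensional $K$ would make $x\mapsto(x_u,x_a)$ map $K$ onto $\mathbb{R}^2$, so the fixed linear functionals giving $x_v$ and $x_{w_{l-1}}$ would vanish identically. But that case never arises.

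Two smaller points. First, you use $\lambda\neq1$ in (1) although it is missing from the statement as printed, and it is genuinely needed there. Take $H_1=K_{1,3}$ attached at its centre, $H_2=C_4$ and $\lambda=1$: differences of leaf indicator vectors give $m_G(1)\ge2>m_{C_4}(1)+1=1$. State it as a hypothesis, as the paper assumes throughout; part (2) holds for every $\lambda$.

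Second, the values at the two cycle neighbours of $u$ do not in general parametrize the solutions of the cycle equations. For $g=4$ and $\lambda=2$ the solutions are $(x_u,x_a,x_{a'},x_b)=(s,t,-s,-t)$, with $a'$ the vertex opposite $u$, so $(x_a,x_b)$ does not see $s$. Use $(x_u,x_a)$, which is what your second-order recursion actually produces.
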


\begin{Lemma}
\label{lem:3.3}\cite[Lemma 2.11]{Li2026}
For any \( \lambda \in \sigma(B_{k,l}) \) with \( \lambda \neq 1 \), we have \( \lambda \notin \sigma(B_{k,l-1}) \).
\end{Lemma}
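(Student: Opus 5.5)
The plan is to turn the statement into a statement about consecutive characteristic polynomials satisfying a three-term recurrence. Throughout, \(l\ge 1\), so that \(B_{k,l-1}\) is defined.

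\textbf{Identifying the matrices.} Set
\[
f_l(\lambda)=\det(\lambda I-B_{k,l}) \qquad (l\ge 0).
\]
Label the vertices of \(T(k,l+1)\) as follows: \(x_1,\dots,x_k\) are the leaves of the star, \(w_1\) is the center, and \(w_1w_2\cdots w_{l+1}\) is the path. Deleting \(w_{l+1}\) gives \(B_{k,l}\), which is \(L(T(k,l))\) with \(1\) added to the diagonal entry of \(w_l\). In particular, \(B_{k,l}\) has diagonal entry \(2\) at \(w_l\) when \(l\ge2\). The key observation is that \(B_{k,l-1}\) is exactly the principal submatrix of \(B_{k,l}\) obtained by deleting the row and column of \(w_l\). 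Consequently \(B_{k,l-2}\) is obtained by deleting both \(w_l\) and \(w_{l-1}\).

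\textbf{Recurrence.} Expanding \(\det(\lambda I-B_{k,l})\) along the row of \(w_l\), which has diagonal entry \(2\) and a single off-diagonal entry \(-1\) in the column of \(w_{l-1}\), gives for \(l\ge 2\)
\[
f_l=(\lambda-2)f_{l-1}-f_{l-2}.
\]

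\textbf{Base cases.} \(B_{k,0}\) is \(L(K_{1,k})\) with the center deleted, namely \(I_k\). Hence \(f_0=(\lambda-1)^k\), which is nonzero for \(\lambda\ne 1\). Explicitly, \(f_1=(\lambda-k-1)(\lambda-1)^k-k(\lambda-1)^{k-1}\), although this formula is not actually needed.

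\textbf{Conclusion.} Suppose \(\lambda\ne1\) and \(f_l(\lambda)=f_{l-1}(\lambda)=0\) for some \(l\ge1\).
\begin{itemize}
\item If \(l=1\), this contradicts \(f_0(\lambda)\neq 0\).
\item If \(l\ge 2\), the recurrence gives \(f_{l-2}(\lambda)=(\lambda-2)f_{l-1}(\lambda)-f_l(\lambda)=0\). Iterating this downward step yields \(f_1(\lambda)=f_0(\lambda)=0\), again a contradiction.
\end{itemize}
Thus \(\lambda\in\sigma(B_{k,l})\) forces \(\lambda\notin\sigma(B_{k,l-1})\).

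\textbf{Main obstacle.} The only delicate step is checking the cofactor expansion. After deleting \(w_l\) and \(w_{l-1}\), the remaining matrix must be exactly \(B_{k,l-2}\), with no leftover diagonal correction at \(w_{l-2}\). This holds because the diagonal of \(B_{k,l}\) records degrees in \(T(k,l+1)\), so each entry is unaffected by the deletions. One must also treat \(l=2\) separately: there \(w_{l-1}=w_1\) is the star center, and deleting it leaves \(I_k=B_{k,0}\), as required.
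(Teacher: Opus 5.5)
Your proof is correct and complete. The paper gives no proof of this lemma; it simply quotes it from Li et al.~\cite{Li2026}. So there is no argument to match line by line, and what you have written is a self-contained proof of the cited fact.

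It uses the same technique the paper uses for the cycle analogue, Lemma~\ref{lem:3.4}. There, combining (2) and (3) gives $\Phi(Q_{g,l};\lambda)=(\lambda-2)\Phi(Q_{g,l-1};\lambda)-\Phi(Q_{g,l-2};\lambda)$. A shared root is then pushed down to the base matrix $L_{u_1}(C_g)$, just as you push a common zero of $f_l,f_{l-1}$ down to $f_0=(\lambda-1)^k$.

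Your situation is simpler than Lemma~\ref{lem:3.4}, because you only track a common zero rather than a double root. The step that makes the recurrence legitimate is the one you check explicitly:
\begin{itemize}
\item deleting $w_l$ from $B_{k,l}$ gives $B_{k,l-1}$;
\item deleting both $w_l$ and $w_{l-1}$ gives $B_{k,l-2}$, because the degrees of the remaining vertices agree in $T(k,l+1)$ and $T(k,l)$;
\item in the edge case $l=2$, the remainder is $I_k=B_{k,0}$.
\end{itemize}
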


For $\lambda\in \mathbb{R}$, let
 $$\mathbb{V}^{\lambda}_{L(G)}=\{\alpha\in \mathbb{R}_n| L(G)\alpha=\lambda \alpha\}$$
 $$\mathbb{Z}^{X}_{L(G)}= \{\alpha\in \mathbb{R}_n| \alpha_x = 0, x\in X\}.$$

For a graph \( G \), if \( \lambda \) is a Laplacian eigenvalue of \( G \) and \( X \) is an eigenvector corresponding to \( \lambda \), then the characteristic equation is \( L(G)X = \lambda X \). This means, for \( v \in V(G) \),
\begin{equation}
(d_v - \lambda)x_v = \sum_{w \in N_G(v)} x_w, \tag{1}
\end{equation}
where \( x_v \) denotes the component of \( X \) at vertex \( v \).

\begin{figure}[H]
  \centering
  \includegraphics[width=0.6\linewidth]{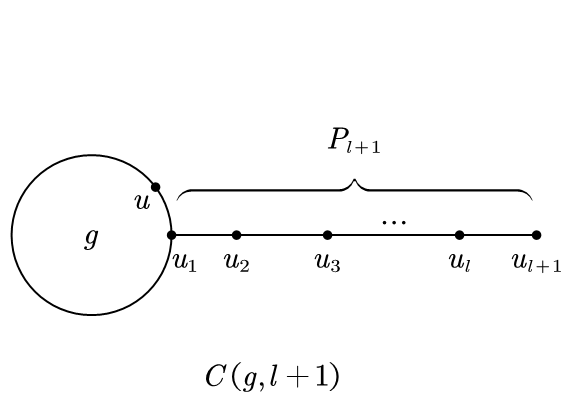}
\caption{$ G = C(g,l+1)$}\label{figure1}
\end{figure}

\begin{Lemma}
\label{lem:3.4}
If \( \lambda \neq 1 \), $l\ge 1$ and \( m_{Q_{g,l}}(\lambda) = 2 \), then \( m_{Q_{g,l-1}}(\lambda) = 1 \).
\end{Lemma}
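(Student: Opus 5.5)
We prove the two inequalities $m_{Q_{g,l-1}}(\lambda)\ge 1$ and $m_{Q_{g,l-1}}(\lambda)\le 1$ separately: the first by interlacing, the second by a three-term recursion along the pendant path that forces a repeated root down to a matrix with simple spectrum.

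\textbf{Setup.} Label the path of $C(g,l+1)$ as $u=w_1,w_2,\dots ,w_{l+1}$, where $u$ lies on $C_g$ and $w_{l+1}$ is the $(l+1)$-vertex. Then $Q_{g,l}$ is the principal submatrix of $L(C(g,l+1))$ on $V(C_g)\cup\{w_2,\dots ,w_l\}$. Its diagonal entry at $w_l$ is still $2$, since it keeps the contribution of the deleted neighbour $w_{l+1}$. In the same way, $Q_{g,l-1}$ is the principal submatrix of $Q_{g,l}$ obtained by deleting the row and column of $w_l$. For $l\ge 2$ this is clear because every remaining diagonal entry is unchanged. For $l=1$, $Q_{g,0}$ is $L(C_g)$ with the row and column of $u$ deleted, i.e.\ $Q_{g,0}=U_{g-1}$, and deleting $u$ from $Q_{g,1}$ gives exactly this matrix.

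\textbf{Lower bound.} By Cauchy interlacing (deleting one row and column lowers a multiplicity by at most one), $m_{Q_{g,l}}(\lambda)=2$ gives $m_{Q_{g,l-1}}(\lambda)\ge 1$.

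\textbf{Upper bound.} Write $\phi_j(x)=\det(xI-Q_{g,j})$. Expanding along the row of the end vertex $w_j$, which has diagonal entry $2$ and a single off-diagonal $-1$ towards $w_{j-1}$, gives for $j\ge 2$
\[
\phi_j(x)=(x-2)\,\phi_{j-1}(x)-\phi_{j-2}(x).
\]
Suppose, for contradiction, that $m_{Q_{g,l-1}}(\lambda)\ge 2$. Together with $m_{Q_{g,l}}(\lambda)=2$, this means $\lambda$ is a root of multiplicity at least $2$ of both $\phi_l$ and $\phi_{l-1}$. If $l\ge 2$, the recursion $\phi_{l-2}=(x-2)\phi_{l-1}-\phi_l$ makes $\lambda$ a root of multiplicity at least $2$ of $\phi_{l-2}$. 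Iterating downward, we eventually reach the pair $(\phi_1,\phi_0)$ with $\lambda$ a root of multiplicity at least $2$ of $\phi_0$; when $l=1$ this is already the hypothesis. But $\phi_0$ is the characteristic polynomial of $U_{g-1}$, the tridiagonal matrix with diagonal entries $2$ and off-diagonal entries $-1$. This is a Jacobi matrix with nonzero off-diagonal entries, so all its eigenvalues are simple. This contradiction yields $m_{Q_{g,l-1}}(\lambda)=1$.

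\textbf{Main obstacle.} The delicate point is index bookkeeping. One must check that the recursion is valid for every $j\ge 2$, i.e.\ that the vertex $w_j$ removed at each step really is an end vertex with diagonal entry $2$ and a single neighbour $w_{j-1}$. One must also check the base identification $Q_{g,0}=U_{g-1}$ in the case where $u$ itself is the deleted vertex. The hypothesis $\lambda\neq 1$ appears not to be needed in this argument.
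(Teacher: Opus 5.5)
Your proof is correct and follows the paper's route: interlacing gives $m_{Q_{g,l-1}}(\lambda)\ge 1$, and the three-term recursion pushes a double root down to $L_u(C_g)=2I-A(P_{g-1})$, whose eigenvalues are simple (your single recursion is the paper's (2) and (3) combined). Working with ``multiplicity at least $2$'' lets you skip the paper's separate dimension argument for $m_{Q_{g,l}}(\lambda)\le 2$. Your one slip is calling the diagonal entry at $w_l$ equal to $2$ when $l=1$ (it is $3$), but this is harmless because you treat $l=1$ separately.
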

\begin{proof}
Let $\Phi(G; \lambda)=(\lambda I-L(G))$.
Note that
\begin{equation}
\Phi(C(g,l); \lambda) =(\lambda -1) \Phi(Q_{g,l-1}; \lambda) - \Phi(Q_{g,l-2}; \lambda), \tag{2}
\end{equation}
and
\begin{equation}
\Phi(Q_{g,l}; \lambda) = \Phi(C(g,l); \lambda) - \Phi(Q_{g,l-1}; \lambda). \tag{3}
\end{equation}

First, by the interlacing theorem, we know that \( m_{Q_{g,l-1}}(\lambda) \ge 1 \). 
Next we prove that \( m_{Q_{g,l}}(\lambda) \le 2 \). Let \( C(g,l+1) \) be as shown in Figure 1, and suppose \( |V(C(g,l+1))| = n+1 \). Let 
\[
X \in \mathbb{V}^{\lambda}_{Q_{g,l}} \cap \mathbb{Z}^{\{u_l, u\}}_{Q_{g,l}},
\]
and let \( x_{v_i} \) denote the component of \( X \) at vertex \( v_i \). Since \( x_{u_l} = 0 \), by equation (1), we obtain \( x_{u_{l-1}} = 0 \). Continuing this process, we get \( x_{u_i} = 0 \) for \( 1 \le i \le l \). Together with \( x_u = x_{u_1} = 0 \), repeatedly applying equation (1) yields \( X = 0 \), which implies that
\[
\dim\left(\mathbb{V}^{\lambda}_{Q_{g,l}} \cap \mathbb{Z}^{\{u_l, u\}}_{Q_{g,l}}\right) = 0.
\]

Using the dimension formula,
\[
\dim\left(\mathbb{V}^{\lambda}_{Q_{g,l}} \cap \mathbb{Z}^{\{u_l, u\}}_{Q_{g,l}}\right)
= \dim(\mathbb{V}^{\lambda}_{Q_{g,l}}) + \dim(\mathbb{Z}^{\{u_l, u\}}_{Q_{g,l}}) - \dim\left(\mathbb{V}^{\lambda}_{Q_{g,l}} \cup \mathbb{Z}^{\{u_l, u\}}_{Q_{g,l}}\right).
\]
Note that
\[
\dim(\mathbb{Z}^{\{u_l, u\}}_{Q_{g,l}}) = n-2,
\qquad
\dim\left(\mathbb{V}^{\lambda}_{Q_{g,l}} \cup \mathbb{Z}^{\{u_l, u\}}_{Q_{g,l}}\right) \le n.
\]
Therefore,
\[
\dim\left(\mathbb{V}^{\lambda}_{Q_{g,l}} \cap \mathbb{Z}^{\{u_l, u\}}_{Q_{g,l}}\right)
\ge \dim(\mathbb{V}^{\lambda}_{Q_{g,l}}) + n - 2 - n
= \dim(\mathbb{V}^{\lambda}_{Q_{g,l}}) - 2.
\]
That is,
\[
\dim(\mathbb{V}^{\lambda}_{Q_{g,l}}) \le \dim\left(\mathbb{V}^{\lambda}_{Q_{g,l}} \cap \mathbb{Z}^{\{u_l, u\}}_{Q_{g,l}}\right) + 2 = 2.
\]
We now assume for contradiction that \( m_{Q_{g,l-1}}(\lambda) = 2 \).
Since \( m_{Q_{g,l}}(\lambda) = 2 \), it follows from (3) that \( \lambda \) is a double root of \( \Phi(C(g,l); \lambda) \). Then by (2), we obtain \( m_{Q_{g,l-2}}(\lambda) = 2 \).

By repeating the above process iteratively, we eventually obtain that \( \lambda \) is a double root of \( \Phi(L_{u_1}(C_g); \lambda) \). However, we know that
\[
L_{u_1}(C_g) = 2 - A(P_{g-1}),
\]
which implies that \( L_{u_1}(C_g) \) has no double eigenvalue, a contradiction.
Therefore, we must have \( m_{Q_{g,l-1}}(\lambda) = 1 \).
\end{proof}

\begin{Lemma}\label{lem:PW}
\cite{PW}
Let \( v \) be a vertex of a graph \( H \) and \( u \) be a vertex of \( G \).
 If 
\[
m_{L_v(F_{uv}(G, P_1; P_2))}(\lambda) = m_{L_u(G)}(\lambda) + 1,
\]
then
\[
m_{F_{uv}(G,H; P_2)}(\lambda) = m_{L_u(G)}(\lambda) + m_{L_v(H)}(\lambda).
\]
\end{Lemma}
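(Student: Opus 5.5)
Write $M=L_v(F_{uv}(G,P_1;P_2))$. Since the pendant vertex $v$ is deleted, $M$ is simply $L(G)+E_{uu}$, where $E_{uu}$ is the matrix unit at $(u,u)$. The approach is a direct kernel computation. Order $V(G)$ before $V(H)$ and set $N=L_v(F_{vu}(H,P_1;P_2))=L(H)+E_{vv}$. Then
\[
L(F_{uv}(G,H;P_2))-\lambda I=\begin{pmatrix} A & -e_ue_v^{T}\\ -e_ve_u^{T} & B\end{pmatrix},
\qquad A=M-\lambda I,\quad B=N-\lambda I,
\]
where $e_u,e_v$ are standard unit vectors. We want the nullity of this matrix. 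Note that deleting row and column $u$ from $M$ gives $L_u(G)$, and deleting row and column $v$ from $N$ gives $L_v(H)$.

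\textbf{Step 1: translate the hypothesis.} We show that $m_M(\lambda)=m_{L_u(G)}(\lambda)+1$ forces $e_u\notin\operatorname{range}(A)$. Suppose instead $e_u\in\operatorname{range}(A)=(\ker A)^{\perp}$. Then every $x\in\ker A$ has $x_u=0$. The rows of $A$ indexed by vertices other than $u$ then show that the restriction of $x$ to $V(G)\setminus\{u\}$ lies in $\ker(L_u(G)-\lambda I)$. Restriction is injective on vectors with $x_u=0$, so $m_M(\lambda)\le m_{L_u(G)}(\lambda)$, a contradiction. Consequently $\ker A$ contains a vector with nonzero $u$-coordinate, so the functional $x\mapsto x_u$ maps $\ker A$ onto $\mathbb{R}$.

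\textbf{Step 2: describe the kernel.} A vector $(x,y)$ is in the kernel exactly when $Ax=y_ve_u$ and $By=x_ue_v$. Because $e_u\notin\operatorname{range}(A)$, the first equation forces $y_v=0$ and $x\in\ker A$. Write $y'$ for $y$ with the $v$-coordinate removed. Given $y_v=0$, the second equation splits into two parts:
\begin{itemize}
\item the rows other than $v$ say $y'\in\ker(L_v(H)-\lambda I)$;
\item row $v$ gives the single scalar condition $\beta^{T}y'=x_u$, where $\beta$ is row $v$ of $B$ restricted to the other coordinates.
\end{itemize}
So the kernel is isomorphic to
\[
\{(x,y')\in\ker A\times\ker(L_v(H)-\lambda I):\ x_u-\beta^{T}y'=0\}.
\]

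\textbf{Step 3: count dimensions.} The constraint $x_u-\beta^{T}y'=0$ is a nonzero linear functional on the product, since by Step 1 it is already nonzero on $\ker A$. It therefore cuts the dimension by exactly one, giving
\[
m_{F_{uv}(G,H;P_2)}(\lambda)=m_M(\lambda)+m_{L_v(H)}(\lambda)-1=m_{L_u(G)}(\lambda)+m_{L_v(H)}(\lambda).
\]

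The main obstacle is Step 1: extracting from the multiplicity hypothesis the structural fact $e_u\notin\operatorname{range}(A)$, which is what decouples the two sides. Once that is in place, Steps 2 and 3 are routine linear algebra.
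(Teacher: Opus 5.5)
The paper gives no proof of this lemma; it is quoted from \cite{PW}, so there is no in-paper argument to compare against. Your proof is correct and complete. In Step 1, the perturbation $E_{uu}$ only changes the $(u,u)$ entry, so the rows of $A$ off $u$ really do reduce to $L_u(G)-\lambda I$ when $x_u=0$. Step 2 correctly uses $e_u\notin\operatorname{range}(A)$ to force $y_v=0$. Step 3's codimension-one count is justified because the functional is already nonzero on $\ker A\times\{0\}$. The argument also covers degenerate cases such as $H=P_1$: there $L_v(H)$ is empty and the formula correctly reduces to $m_{F_{uv}(G,P_1;P_2)}(\lambda)=m_{L_u(G)}(\lambda)$.

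For comparison, a common alternative in the style of Parter--Wiener theory does the same thing as a single bordering step at $v$. Write $\hat L=L(F_{uv}(G,H;P_2))$ and let $\hat L_v$ be $\hat L$ with row and column $v$ deleted. Then $\hat L_v=M\oplus L_v(H)$, so $m_{\hat L_v}(\lambda)=m_{L_u(G)}(\lambda)+1+m_{L_v(H)}(\lambda)$. The off-diagonal part $b$ of column $v$ of $\hat L$ has $u$-entry $-1$. Pairing it with the vector $(x,0)\in\ker(\hat L_v-\lambda I)$, where $x$ comes from your Step 1, gives $-x_u\neq 0$. The elementary bordered-matrix fact (if $b\not\perp\ker(\hat L_v-\lambda I)$ then the multiplicity drops by one) then yields $m_{\hat L}(\lambda)=m_{\hat L_v}(\lambda)-1$, which is the claim.

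That route is shorter and records the extra information that $v$ is a Parter vertex of $\hat L$ for $\lambda$. Your two-block computation avoids that machinery and gives an explicit description of the eigenspace: every $\lambda$-eigenvector vanishes at $v$ and restricts to an element of $\ker A$ on $V(G)$. The cost is a bit more bookkeeping with $y_v$ and $\beta$. The key input, a $\lambda$-eigenvector of $M$ that is nonzero at $u$, is the same in both.
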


\begin{figure}[H]
  \centering
  \includegraphics[width=0.6\linewidth]{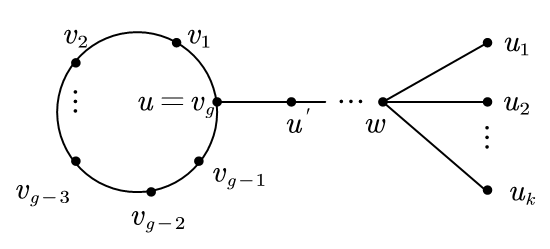}
\caption{$ G = F_{u,w}(C, K_{1,k}; P_l)$}\label{figure2}
\end{figure}

\begin{Lemma}
Let \( G = F_{u,w}(C, K_{1,k}; P_l) \) as shown in Figure 2 and $\lambda\neq 1$. Then \( G \) is \( \lambda \)-optimal if and only if \( l > 1 \), \( m_C(\lambda) = 2 \), and \( m_{T_{k,l-1}}(\lambda) = 1 \).
\end{Lemma}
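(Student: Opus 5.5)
The first step is to compute the invariants. Here $C$ is a cycle $C_g$, since $c(G)=1$ and $q(G)=1$ force the only quasi-pendant vertex to be the centre $w$ of $K_{1,k}$. We need $k\ge 2$, or else $l>1$ with $k\ge1$, for $w$ to be the unique quasi-pendant vertex. If $l=1$, then $u=w$ sits on the cycle; I take this degenerate case to be excluded, or to be handled by the argument below. Since $c(G)+q(G)=2$, $\lambda$-optimality means $m_G(\lambda)=2\cdot1+1-1=2$.

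The plan is to prove matching upper and lower bounds on $m_G(\lambda)$ in terms of $m_C(\lambda)$ and $m_{T_{k,l-1}}(\lambda)$. Set $H=T(k,l-1)$, which is $K_{1,k}$ with a path of $l-1$ vertices hanging at the centre; its $(l-1)$-vertex is adjacent to $u$ in $G$. Deleting the edge joining that vertex to $u$ gives $C_g\cup T(k,l-1)$. Lemma 3.1 (edge interlacing) then yields
\[
|m_G(\lambda)-m_C(\lambda)-m_{T_{k,l-1}}(\lambda)|\le 1 .
\]

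For the \emph{sufficiency} direction, assume $m_C(\lambda)=2$ and $m_{T_{k,l-1}}(\lambda)=1$. Interlacing gives $m_G(\lambda)\ge 3-1=2$. For the reverse bound, Lemma 3.2(1) with $H_1=K_{1,k}$ and $H_2=C_g$ gives $m_G(\lambda)\le m_{C_g}(\lambda)+1=3$. To exclude the value $3$, I take an eigenvector basis and note that the $\lambda$-eigenspace of $C_g$ is $2$-dimensional, so some eigenvector vanishes at $u$. Because $m_{T_{k,l-1}}(\lambda)=1$ and $\lambda\ne1$, Lemma 3.3 (with $B_{k,l-2}$, which is $L(T(k,l-1))$ with the end vertex removed) controls whether the tree eigenvector vanishes at its attachment vertex. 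I would then show that $m_G(\lambda)=3$ would force an eigenvector of $G$ supported on the tree part with zero value at the attachment vertex, contradicting Lemma 3.3. Alternatively, I would apply Lemma \ref{lem:PW} with $G$ replaced by the cycle side and $H$ by the tree side. That gives $m_G=m_{L_u(C)}+m_{L_v(T(k,l-1))}$. Since $m_{L_u(C_g)}(\lambda)=1$ (as $L_u(C_g)=2I-A(P_{g-1})$ has simple spectrum) and $m_{L_v(T)}(\lambda)=m_{B_{k,l-2}}(\lambda)$, this would be $1+1=2$ once Lemma 3.3 and Cauchy interlacing are used to show $m_{B}(\lambda)=1$. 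Verifying the hypothesis of Lemma \ref{lem:PW} is a determinant recurrence, similar to (2)–(3) in the proof of Lemma 3.4.

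For \emph{necessity}, assume $m_G(\lambda)=2$. If $m_C(\lambda)\le1$, Lemma 3.2(1) gives $m_G\le 2$, with equality only if extra structure holds. To rule this out, I would use Lemma 3.2(2) from the other side, $m_G\le m_{T}(\lambda)+2$ for the appropriate tree, together with the eigenvector-support argument: an eigenvector vanishing on the cycle has support in the tree part, which is impossible when $\lambda\ne1$ and pendant vertices are present. In that case the multiplicity drops to $1$. Hence $m_C(\lambda)=2$. Given this, interlacing gives $2\ge 2+m_T-1$, so $m_T\le1$. If $m_T=0$, I would show by an eigenvector argument that $m_G\le1$: every $\lambda$-eigenvector of $G$ must vanish on $u$'s side of the tree, so its restriction to the cycle is an eigenvector of $C_g$ vanishing at $u$, and that space is $1$-dimensional. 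Hence $m_T=1$. The case $l=1$ is excluded in the same way, because then $u$ is both on the cycle and quasi-pendant, and the restriction argument gives $m_G\le1$.

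The main obstacle is the sharp upper bound $m_G(\lambda)\le2$ in the sufficiency part, together with its converse in necessity. Both require controlling how the two eigenspaces glue across the bridge, and this is where Lemma 3.3 and Lemma \ref{lem:PW} must be combined carefully.
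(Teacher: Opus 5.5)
Your proposal has genuine gaps. One of your two routes to the sufficiency upper bound is wrong, and the necessity direction is asserted rather than proved.

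\textbf{Sufficiency.} Your lower bound $m_G(\lambda)\ge m_C(\lambda)+m_{T_{k,l-1}}(\lambda)-1=2$ is exactly the paper's argument. The upper bound you call the main obstacle is free. Li et al.'s bound $m_G(\lambda)\le 2c(G)+q(G)$, with equality only for stars and cycles, already gives $m_G(\lambda)\le 2$, and this is what the paper uses. Alternatively, a $\lambda$-eigenvector vanishing at $u$ and at a cycle neighbour of $u$ vanishes identically by propagating (1), so the dimension count gives $m_G(\lambda)\le 2$.

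Your route through Lemma \ref{lem:PW} fails. Let $u'$ be the vertex of $T(k,l-1)$ adjacent to $u$. For $\lambda\neq 1$, $m_{T(k,l-1)}(\lambda)=1$ forces $m_{B_{k,l-2}}(\lambda)=0$, not $1$. On $T(k,l-1)-u'$ the solution of (1) is unique up to scale, since it is determined from the pendant vertices of the star. The $L(T(k,l-1))$-eigenvector is nonzero at $u'$, so the same vector cannot also be an eigenvector of $B_{k,l-2}$. A concrete check: take $C_3$, $K_{1,2}$, $l=2$, $\lambda=3$. Then $m_G(3)=2$, but $m_{L_u(C_3)}(3)+m_{L_w(K_{1,2})}(3)=1+0=1$. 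So the hypothesis of Lemma \ref{lem:PW} cannot hold here, and that formula would give the wrong value.

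\textbf{Necessity.} This is where the real work lies, and your sketch does not supply it.

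To get $m_C(\lambda)=2$, the paper takes a nonzero eigenvector $X$ with $x_w=0$, which exists because $m_G(\lambda)=2$. Since $\lambda\neq 1$, the pendant entries vanish. Propagation along the path then kills the whole tree part and $x_u$. Hence $X|_{C-u}$ is an eigenvector of $L_u(C_g)=2I-A(P_{g-1})$ and $X|_C$ is one of $L(C_g)$. So $\lambda=2-2\cos(j\pi/g)\notin\{0,4\}$, and $\lambda$ is a double eigenvalue of $C_g$. Your substitute does not do this: Lemma 3.2(2) ``from the other side'' only gives $m_G(\lambda)\le 3$, and eigenvectors vanishing on the cycle yield no contradiction when $m_C(\lambda)\le 1$.

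To get $m_{T_{k,l-1}}(\lambda)=1$ and $l>1$, you assert that every eigenvector vanishes on the tree side when $m_T(\lambda)=0$ or $l=1$. That is true, but it is precisely the nontrivial point. By itself, $m_T(\lambda)=0$ only says $x_u\neq x_{u'}$ whenever $x_w\neq 0$. What is missing is the cycle computation:
\begin{itemize}
\item Write $2-\lambda=2\cos\theta$ with $\theta\in\frac{2\pi}{g}\mathbb{Z}$. The cycle entries of any eigenvector are then $x_u\cos(j\theta)+c\sin(j\theta)$.
\item For $l>1$, the equation at $u$ collapses to $x_u=x_{u'}$. This says the tree part is a $\lambda$-eigenvector of $L(T(k,l-1))$.
\item For $l=1$, the equation at $u$ collapses to $kx_u=kx_u/(1-\lambda)$. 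This forces $x_u=0$ unless $\lambda=0$.
\end{itemize}
The eigenvectors with $x_w=0$ span at most one dimension, because $L_u(C_g)$ has simple spectrum. So $m_G(\lambda)=2$ guarantees an eigenvector with $x_w\neq 0$, and the computation finishes both claims. The paper carries this out via the eigenvector $Y$ of $L_{v_1}(G)$ with the sine pattern and equation (4). Without such a computation your necessity argument is incomplete.
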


\begin{proof}
The vertices of \( G \) are labeled as in Figure 2. 
First, we assume that \( G \) is \( \lambda \)-optimal.
Let \( X \in \mathbb{V}_{L(G)}^\lambda \cap \mathbb{Z}_{L(G)}^w \), and let \( x_{v_i} \) denote the component of \( X \) at vertex \( v_i \). 
Since \( m_G(\lambda) = 2 \), by the dimension formula we have
\[
\dim(\mathbb{V}_{L(G)}^\lambda \cap \mathbb{Z}_{L(G)}^w) \ge \dim(\mathbb{V}_{L(G)}^\lambda) - 1 = 1,
\]
so there exists such a nonzero vector \( X \).
Using equation (1), we have
\[
(1 - \lambda)x_{u_i} = x_w = 0,
\]
so \( x_{u_i} = 0 \) since \( \lambda \neq 1 \). Similarly, for any vertex \( v \in V(T_{k,l}) \), we have \( x_v = 0 \).

Since \( x_{v_g} = x_{u'} = 0 \), from \( L(G)X = \lambda X \), we obtain that \( X|_{(C-v_g)} \in \mathbb{V}_{L_{v_g}(C)}^\lambda \) and \( X|_C \in \mathbb{V}_{L(C)}^\lambda \). Hence \( \lambda \) is an eigenvalue of both \( L_{v_g}(C) \) and \( L(C) \).

Note that \( L_{v_g}(C) = 2 - A(P_{g-1}) \), so
\[
\lambda = 2 - 2\cos\frac{k\pi}{g}, \qquad k = 1, 2, \dots, g-1.
\]
Thus \( \lambda \neq 0 \) or \( 4 \), and therefore \( \lambda \) is a double eigenvalue of \( L(C) \).

By the interlacing theorem, we have \( m_{L_{v_1}(G)}(\lambda) \ge 1 \).

Let \( Y \) be an eigenvector of \( L_{v_1}(G) \) corresponding to \( \lambda = 2 - 2\cos(k\pi/g) \), and let \( y_v \) denote the component of \( Y \) at vertex \( v \). Let \( y_{v_2} = a(\eta - \eta^{-1}) \), where \( \eta \) is a \( 2g \)-th root of unity. We know that \( a \neq 0 \); otherwise, by equation (1), we would have \( Y = 0 \), a contradiction.

Using equation (1), we obtain:
\[
\lambda y_{v_2} = 2y_{v_2} - y_{v_3} \quad\Longrightarrow\quad y_{v_3} = a(\eta^2 - \eta^{-2}),
\]
\[
\lambda y_{v_3} = 2y_{v_3} - y_{v_2} - y_{v_4} \quad\Longrightarrow\quad y_{v_4} = a(\eta^3 - \eta^{-3}),
\]
\[
\vdots
\]
\[
\lambda y_{v_{g-2}} = 2y_{v_{g-2}} - y_{v_{g-3}} - y_{v_{g-1}} \quad\Longrightarrow\quad y_{v_{g-1}} = a(\eta^{g-2} - \eta^{-(g-2)}),
\]
\[
\lambda y_{v_{g-1}} = 2y_{v_{g-1}} - y_{v_{g-2}} - y_{v_g} \quad\Longrightarrow\quad y_{v_g} = a(\eta^{g-1} - \eta^{-(g-1)}).
\]

Using \( y_{v_{g-1}} = a(\eta^{g-2} - \eta^{-(g-2)}) \) and \( y_{v_g} = a(\eta^{g-1} - \eta^{-(g-1)}) \), we obtain
\begin{equation}
\lambda y_{v_g} = 2y_{v_g} - y_{v_{g-1}}. \tag{4}
\end{equation}

If \( l > 1 \), i.e., \( u \neq w \), then from \( L_{v_1}(G)Y = \lambda Y \), we have
\[
\lambda y_{v_g} = 3y_{v_g} - y_{v_{g-1}} - y_{u'}.
\]
Combining this with (4), we obtain \( y_{v_g} = y_{u'} \neq 0 \). Since \( L_{v_1}(G)Y = \lambda Y \), we have
\[
\begin{aligned}
\lambda y_{u'} 
&= d_G(u')y_{u'} - y_{v_g} - \sum_{u' \sim v \in T(k,l) \setminus \{v_g\}} y_v \\
&= (d_G(u') - 1)y_{u'} - \sum_{u' \sim v \in T(k,l) \setminus \{v_g\}} y_v \\
&= d_{T(k,l-1)}(u')y_{u'} - \sum_{u' \sim v \in T(k,l-1)} y_v.
\end{aligned}
\]
This implies that \( Y|_{T_{k,l-1}} \in \mathbb{V}_{T_{k,l-1}}^\lambda \), and hence \( m_{T_{k,l-1}}(\lambda) = 1 \).

If \( l = 1 \), i.e., \( u = w \), then from \( L_{v_1}(G)Y = \lambda Y \), we have
\[
\lambda y_{v_g} = (k+2)y_{v_g} - y_{v_{g-1}} - y_{u_1} - y_{u_2} - \cdots - y_{u_k}.
\]
By symmetry of the graph, we easily obtain \( y_{u_1} = y_{u_2} = \cdots = y_{u_k} \). That is,
\[
\lambda y_{v_g} = (k+2)y_{v_g} - y_{v_{g-1}} - ky_{u_1}.
\]
Combining this with (4), we know that \( y_{u_1} = y_{v_g} \). Since
\[
\lambda y_{u_1} = y_{u_1} - y_{v_g} = 0,
\]
we have \( \lambda = 0 \) because \( y_{u_1} \neq 0 \), contradicting \( \lambda \neq 0 \).

We now prove the necessity. We have
\[
m_G(\lambda) \ge m_{G - e_{u,u'}}(\lambda) - 1 = m_C(\lambda) + m_{T_{k,l-1}}(\lambda) - 1 = 2.
\]
Since \( m_G(\lambda) \le 2 \), it follows that \( m_G(\lambda) = 2 \).
\end{proof}

\begin{figure}[H]
  \centering
  \includegraphics[width=1.0\linewidth]{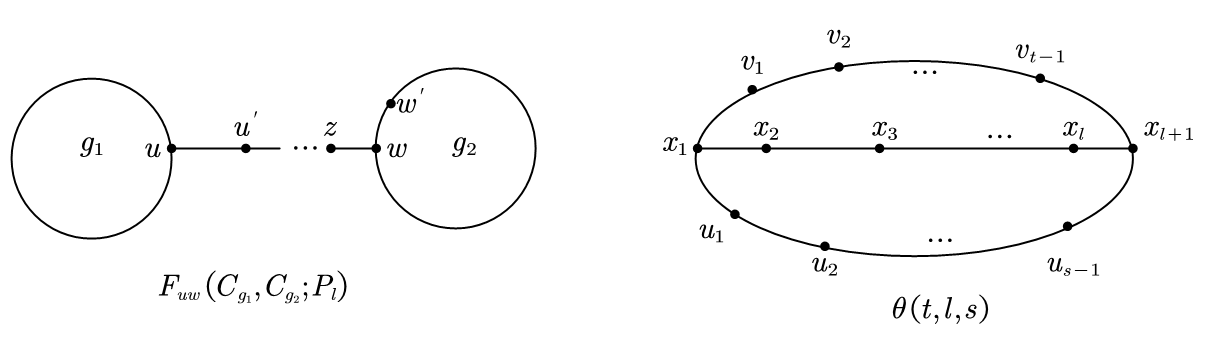}
\caption{$ G = F_{u,w}(C_{g_1}, C_{g_2}; P_l)$ and $G=\theta(t,l,s)$}\label{figure3}
\end{figure}

\begin{Lemma}
If \( c(G) = 2 \), \( q(G) = 0 \), and \( \lambda \neq 1 \), then \( G \) is \( \lambda \)-optimal if and only if 
\[
G = F_{uw}(C_{g_1}, C_{g_2}; P_l), \quad l > 1,
\]
\[
m_{C_{g_1}}(\lambda) = m_{C_{g_2}}(\lambda) = 2,
\]
and when \( l > 2 \),
\[
m_{P_{l-2}}(\lambda) = 1.
\]
\end{Lemma}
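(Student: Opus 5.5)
Since $c(G)=2$ and $q(G)=0$, $G$ has no pendant vertices, so $G$ is a bicyclic graph with minimum degree at least $2$. Its base is therefore either a dumbbell $F_{uw}(C_{g_1},C_{g_2};P_l)$ with $l\ge 1$, or a theta graph $\theta(t,l,s)$ (three internally disjoint paths joining two vertices of degree $3$). $\lambda$-optimality here means $m_G(\lambda)=2c+q-1=3$. I will treat the theta graph and the dumbbell separately, and show that optimality forces the listed conditions; the sufficiency direction will be a one-line edge-deletion/interlacing argument.

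\textbf{Theta graph.} To rule it out, I pick a degree-$3$ vertex $x$ and use interlacing on $L_x(G)$. The matrix $L_x(\theta)$ is a direct sum of three path-type matrices of the form $U_{t}$, $U_l$, $U_s$ (tridiagonal with diagonal entries $2$, i.e.\ $2I-A(P)$). Each summand has only simple eigenvalues, so $m_{L_x(G)}(\lambda)\le 3$, and hence $m_G(\lambda)\le 4$. To get below $3$ I would argue through eigenvectors instead. Take $X\in\mathbb{V}^\lambda_{L(G)}\cap\mathbb{Z}^{\{x,y\}}$, where $x,y$ are the two branch vertices; by the dimension formula this space has dimension at least $m_G(\lambda)-2\ge 1$. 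On each of the three paths, $X$ is an eigenvector of a $U$-matrix, so it is determined up to a scalar; the equations at $x$ and $y$ then impose two linear conditions on the three scalars, leaving dimension at most $1$. Feeding this back gives $m_G(\lambda)\le 3$. In the equality case every path must carry a $\lambda$-eigenvector of its $U$, and the end-derivative conditions at $x$ and $y$ must be dependent. I expect this to collapse to a sign condition, namely that the three end components $(-1)^{\cdot}$ sum to zero at both ends, which is impossible for three terms $\pm1$. If that parity argument fails, I would instead apply Lemma 3.1 to an edge $e$ incident to $x$: $G-e$ is a cycle with pendant paths attached, whose multiplicity is at most $2$, and interlacing gives $m_G(\lambda)\le 3$. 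Equality then needs a finer contradiction, and \emph{this step is where I expect the main obstacle}.

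\textbf{Dumbbell.} Let $G=F_{uw}(C_{g_1},C_{g_2};P_l)$. Lemma 3.2(2) gives $m_G(\lambda)\le m_{C(g_2,l)}(\lambda)+2$, so equality forces $m_{C(g_2,l)}(\lambda)\ge 1$, and a symmetric statement holds for the other cycle. Mimicking the proof of the preceding lemma, I take $X\in\mathbb{V}^\lambda_{L(G)}\cap \mathbb{Z}^{\{u,w\}}$, which has dimension at least $1$. The path interior then solves $U_{l-2}X=\lambda X$ (or is forced to vanish). On each cycle minus its attachment vertex, $X$ solves $L_{u}(C_{g_1})=2I-A(P_{g_1-1})$, so $\lambda=2-2\cos(k\pi/g_1)\notin\{0,4\}$. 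Combining this with a second vector vanishing at only one of $u,w$, I get $m_{C_{g_i}}(\lambda)=2$ for both $i$. When $l=1$, the eigen-equation at the coalescence vertex, together with the symmetric form $a(\eta^j-\eta^{-j})$ from the previous proof, gives a contradiction exactly as in the $l=1$ case there; so $l>1$. When $l>2$, the interior path must support an eigenvector, i.e.\ $m_{U_{l-2}}(\lambda)=1$; the statement writes this as $m_{P_{l-2}}(\lambda)=1$ in the paper's notation, and I will use Lemma 3.3 and Lemma 3.4 to pin down the remaining boundary equations.

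\textbf{Sufficiency.} Delete the two edges joining the interior path to the cycles. This leaves $C_{g_1}\cup C_{g_2}\cup P_{l-2}$, with multiplicity $2+2+1=5$. Since each edge deletion changes the multiplicity by at most $1$ (Lemma 3.1), $m_G(\lambda)\ge 3$. Combined with the upper bound $3$, this gives equality. When $l=2$, deleting the single bridge gives multiplicity $4$, so $m_G(\lambda)\ge 3$ likewise.
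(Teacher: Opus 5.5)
\textbf{There is a genuine gap: the forward direction (optimality $\Rightarrow$ the stated structure) is not established in either case, and for the dumbbell your sketch leads to a wrong condition.}

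Your reverse direction is correct and is the paper's argument: delete the bridge, or the two edges joining the interior of $P_l$ to the cycles, then use Lemma 3.1 with the upper bound $3$.

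\emph{Theta graph.} You have no argument here. $L_x(\theta)$ is not a direct sum of three $U$-matrices: after deleting one branch vertex, the three paths still meet at $y$. Only deleting both $x$ and $y$ splits the matrix. The dimension count gives only $\dim(\mathbb{V}^{\lambda}_{L(G)}\cap\mathbb{Z}^{\{x,y\}}_{L(G)})\ge 1$. That forces neither that all three paths carry eigenvectors nor that the end conditions are dependent. The ``finer contradiction'' you defer is the whole content of this case.

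\emph{Dumbbell.} A vector vanishing at $u$ and $w$ need not be nonzero on the interior path. Also, $m_{U_{l-2}}(\lambda)=1$ is \emph{not} the same as $m_{P_{l-2}}(\lambda)=1$: $\sigma(U_n)=\{2-2\cos\frac{k\pi}{n+1}\}_{k=1}^{n}$, while $\sigma(L(P_n))=\{2-2\cos\frac{k\pi}{n}\}_{k=0}^{n-1}$. Here is a counterexample.
\begin{itemize}
\item Take $\lambda=3$ and $G=F_{uw}(C_3,C_3;P_5)$ with path $u z_1 z_2 z_3 w$.
\item Put $1$ on the four degree-$2$ triangle vertices and $-2,-2,4,-2,-2$ on $u,z_1,z_2,z_3,w$. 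This is a $3$-eigenvector.
\item Together with the two antisymmetric triangle vectors, $m_G(3)=3$, so $G$ is optimal.
\item Yet $\mathbb{V}^{3}_{L(G)}\cap\mathbb{Z}^{\{u,w\}}_{L(G)}$ is spanned by the triangle vectors, which vanish on the path.
\item Moreover $3\notin\sigma(U_3)=\{2-\sqrt2,2,2+\sqrt2\}$, although $3\in\sigma(L(P_3))=\{0,1,3\}$.
\end{itemize}
Your $l=1$ step also does not carry over ``exactly''. The contradiction in the previous lemma came from the eigen-equation at a pendant vertex, and a dumbbell has none.

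\textbf{The missing idea} is the one your fallback brushed against: equality after one edge deletion makes $G-e$ itself optimal, and then Lemma 3.6 supplies arithmetic information.
\begin{itemize}
\item Let $e$ be a cycle edge at a degree-$3$ vertex such that $G-e$ is a cycle $C_g$ with one pendant path of $r$ vertices. Then $c(G-e)+q(G-e)=2$, so $2\ge m_{G-e}(\lambda)\ge m_G(\lambda)-1=2$.
\item Hence $G-e$ is $\lambda$-optimal, and Lemma 3.6 (with $k=1$) gives $m_{C_g}(\lambda)=2$, $r\ge 2$ and $\lambda\in\sigma(L(P_r))$.
\item Write $\lambda=2-2\cos(2i\pi/m)$ with $\gcd(i,m)=1$ and $m\ge 3$ (since $\lambda\notin\{0,4\}$). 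The conditions become $m\mid g$ and $m\mid 2r$.
\end{itemize}

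For the dumbbell, deleting an edge of $C_{g_2}$ at $w$ gives $m\mid g_1$ and $m\mid 2(l+g_2-2)$. By symmetry $m\mid g_2$, so $m\mid 2(l-2)$. This is impossible for $l=1$, and for $l>2$ it is exactly $\lambda\in\sigma(L(P_{l-2}))$.

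For $\theta$, let the branch paths have $a,b\ge 2$ and $c$ edges. The same deletions give $m$ dividing $a+c$, $b+c$ and $2(a-1)$. For the third cycle, use the same deletion on the $c$-path, or simply $m_{C_{a+b}}(\lambda)\ge m_G(\lambda)-1=2$ when $c=1$, giving $m\mid a+b$. Hence $m\mid 2a$, and with $m\mid 2(a-1)$ this gives $m\mid 2$, a contradiction.

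This is the paper's route, and it needs no eigenvector bookkeeping on $\theta$ at all.
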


\begin{proof}
We first prove the sufficiency. We divide the proof into two cases.

\medskip
\noindent \textbf{Case 1.} \( G = F_{uw}(C_{g_1}, C_{g_2}; P_l) \), \( l \ge 1 \), as shown in Figure 3.

If \( l = 1 \), i.e., \( u = w \), let \( G' = G - e_{ww'} \). Then
\[
m_{G'}(\lambda) \ge m_G(\lambda) - 1 = 2.
\]
Since \( m_{G'}(\lambda) \le 2c(G') + q(G') - 1 = 2 \), we have \( m_{G'}(\lambda) = 2 \), meaning that \( G' \) is \( \lambda \)-optimal.

Thus, by Lemma 3.6, we obtain
\[
m_{C_{g_1}}(\lambda) = 2, \qquad m_{P_{g_1-1}}(\lambda) = 1.
\]
Similarly, we have
\[
m_{C_{g_2}}(\lambda) = 2, \qquad m_{P_{g_2-1}}(\lambda) = 1.
\]

Let \( \lambda = 2 - 2\cos(2k\pi/m) \), where \( (k, m) = 1 \), \( \lambda \neq 0, 4 \), and \( k \in [1, m] \). Then
\[
g_1 \equiv 0 \pmod m \quad \text{and} \quad 2(g_1 - 1) \equiv 0 \pmod m.
\]
That is,
\[
g_1 = am, \qquad 2g_1 = bm + 2 \quad \text{for some } a, b \in \mathbb{N}.
\]
Hence \( g_1 = (b - a)m + 2 \), i.e., \( g_1 \equiv 2 \pmod m \). Combining this with \( g_1 \equiv 0 \pmod m \), we obtain \( m = 2 \), so \( \lambda = 0 \) or \( 4 \), a contradiction.

If \( l \ge 2 \), we similarly have \( m_{C_{g_1}}(\lambda) = m_{C_{g_2}}(\lambda) = 2 \). Moreover, when \( l > 2 \), since \( G' = G - e_{ww'} \) is \( \lambda \)-optimal, we have
\[
m_{P_{g_2 + l - 2}}(\lambda) = 1.
\]
Thus,
\[
g_2 \equiv 0 \pmod m \quad \text{and} \quad 2(l + g_2 - 2) \equiv 0 \pmod m.
\]
A straightforward calculation yields \( 2(l - 2) \equiv 0 \pmod m \), which implies that \( P_{l-2} \) has \( \lambda \) as an eigenvalue.

\medskip
\noindent \textbf{Case 2.} \( G = \theta(t, l, s) \), as shown in Figure 3.

We know that at least two of \( t, l, s \) are greater than or equal to 2. Without loss of generality, assume \( t, l \ge 2 \).

Similar to the proof of Case 1, we know that both \( G - e_{v_{t-1}x_{l+1}} \) and \( G - e_{x_l x_{l+1}} \) are \( \lambda \)-optimal. Let
\[
\lambda = 2 - 2\cos(2k\pi/m),
\]
where \( (k, m) = 1 \), \( \lambda \neq 0, 4 \), and \( k \in [1, m] \). Then we have
\[
t + s \equiv 0 \pmod m, \qquad l + s \equiv 0 \pmod m,
\]
and similarly,
\[
t + l \equiv 0 \pmod m.
\]
This implies that \( 2t \equiv 0 \pmod m \).

Since \( G - e_{v_{t-1}x_{l+1}} \) is \( \lambda \)-optimal, we have
\[
2(t - 1) \equiv 0 \pmod m.
\]
Combining this with \( 2t \equiv 0 \pmod m \), we obtain \( m = 2 \), again a contradiction.

\medskip
\noindent We now prove the necessity.

If \( l = 2 \), then
\[
m_G(\lambda) \ge m_{G - e_{u,w}}(\lambda) - 1 = 3.
\]
Since \( m_G(\lambda) \le 2c(G) + q - 1 = 3 \), it follows that \( G \) is \( \lambda \)-optimal.

If \( l \ge 2 \), then
\[
3 \ge m_G(\lambda) \ge m_{G - e_{u,u'} - e_{zw}}(\lambda) - 2 = 2 + 2 + 1 - 2 = 3.
\]
Thus \( G \) is also \( \lambda \)-optimal.
\end{proof}

By Lemmas 3.6 and 3.7, we know that the conclusion of Theorem 2.1 holds.

 \noindent $\bullet$\ \ \ \ {\sf $s$-$p$-deletion}

Let $H$, $K_{1,k}$, and $P_l$ be three pairwise disjoint graphs, where $u \in V(H)$ with $d_H(u) \geq 2$, and $v$ is the center vertex of$K_{1,k}$.
Respectively identify $u$ and $v$ with two pendant vertices of a path $P_l$  (if $l = 1$, identifying $u$ and $v$), the resultant graph is denoted by $G$.
Conversely,  we say that $H$ is obtained from $G$ through an $s$-$p$-deletion operation.

\noindent $\bullet$\ \ \ \ {\sf $c$-$p$-deletion}

 Let $H$, $C_k$, and $P_l$ be three pairwise disjoint graphs, where $u \in V(H)$ and $d_H(u) \geq 2$, and $v$ is a vertex of $C_k$.
Respectively identify $u$ and $v$ with two pendant vertices of a path $P_l$ (if $l = 1$, identifying $u$ and $v$), we obtain  a graph $G$.
Conversely,  we say that $H$ is obtained from $G$ through an $c$-$p$-deletion operation.

\noindent $\bullet$\ \ \ \ {\sf $c$-deletion}

Let \( v_0 \sim v_1 \sim v_2 \sim \cdots \sim v_k \sim v_{k+1} \) be a sequence of vertices on some cycle of \( G \), where \( d_G(v_0) \ge 3 \), \( d_G(v_{k+1)} \ge 3 \), and \( d_G(v_1) = d_G(v_2) = \cdots = d_G(v_k) = 2 \). Then the graph
\[
G_1 = G - \{v_1, v_2, \dots, v_k\}
\]
is said to be obtained from \( G \) through a \( c\)-deletion operation.

\begin{Lemma}
Let \( G \) be a graph with \( c(G) + q(G) \ge 3 \), and let \( G_1 \) be a graph obtained from \( G \) by an \( s \)-\( p \)-deletion, a \( c \)-\( p \)-deletion, or a \( c \)-deletion operation. If \( \lambda \neq 1 \) and \( G \) is \( \lambda \)-optimal, then \( G_1 \) is also \( \lambda \)-optimal.
\end{Lemma}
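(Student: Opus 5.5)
We argue by a sandwich: a lower bound on $m_{G_1}(\lambda)$ obtained from $m_G(\lambda)$, set against the general upper bound of Li et al. in the form $m_{G_1}(\lambda)\le 2c(G_1)+q(G_1)-1$. That upper bound needs $c(G_1)+q(G_1)\ge 2$. This holds because each operation lowers $c+q$ by exactly one, so $c(G_1)+q(G_1)=c(G)+q(G)-1\ge 2$.

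\textbf{The $s$-$p$-deletion.} Here $G=F_{uv}(H,K_{1,k};P_l)$ with $H=G_1$. The center $v$ is a quasi-pendant vertex of $G$ that disappears, while $c(G_1)=c(G)$. The condition $d_H(u)\ge 2$ guarantees that $u$ does not become pendant, so no new quasi-pendant vertex is created. Hence $q(G_1)=q(G)-1$; the degenerate case where $u$ was itself quasi-pendant only through the removed branch cannot happen because of $d_H(u)\ge2$. By Lemma 3.2(1), $m_G(\lambda)\le m_{G_1}(\lambda)+1$, so
\[
m_{G_1}(\lambda)\ge m_G(\lambda)-1=2c(G)+q(G)-2=2c(G_1)+q(G_1)-1.
\]
Combined with the upper bound, this gives equality.

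\textbf{The $c$-$p$-deletion.} This is identical in structure, using Lemma 3.2(2). Now $c(G_1)=c(G)-1$ and $q(G_1)=q(G)$, and $d_H(u)\ge2$ again keeps the quasi-pendant count unchanged. Then
\[
m_{G_1}(\lambda)\ge m_G(\lambda)-2=2c(G_1)+q(G_1)-1.
\]

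\textbf{The $c$-deletion.} Here we remove the internal path $v_1\cdots v_k$ between the principal vertices $v_0$ and $v_{k+1}$ on a cycle. We delete edges one at a time using Lemma 3.1, which gives $|m_{G-e}(\lambda)-m_G(\lambda)|\le1$. Delete the edge $v_0v_1$ and then the edge $v_kv_{k+1}$; this leaves $G_1$ together with a separate path component $P_k$ (if $k=0$ it is the single edge $v_0v_1$ and nothing is left over). Thus
\[
m_{G_1}(\lambda)+m_{P_k}(\lambda)\ge m_G(\lambda)-2.
\]
We need $c(G_1)=c(G)-1$, $G_1$ connected, and $q(G_1)=q(G)$. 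Since the path lay on a cycle, $G_1$ stays connected and $c$ drops by one. Because $d_G(v_0),d_G(v_{k+1})\ge3$, these endpoints keep degree at least $2$, so no pendant vertices are created and $q$ is unchanged, provided $G_1$ is not a single vertex or edge.

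\textbf{Main obstacle.} This bound loses too much whenever $m_{P_k}(\lambda)=1$: it then only gives $m_{G_1}(\lambda)\ge 2c(G_1)+q(G_1)-2$. The remedy is a finer eigenvector argument. We use the dimension formula, as in Lemma 3.4, with the subspace $\mathbb{V}^{\lambda}_{L(G)}\cap\mathbb{Z}^{\{v_1\}}_{L(G)}$, of dimension at least $m_G(\lambda)-1$. The goal is to show this subspace injects, by restriction, into $\mathbb{V}^{\lambda}_{L(G_1)}$ with one more independent condition, or else that vectors vanishing at $v_1$ vanish along the whole path. Concretely, if $x_{v_1}=0$, then equation (1) along the path propagates a sine-type solution, and vanishing at $v_1$ forces $x_{v_0}=x_{v_2}$-type relations. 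Alternatively, we use that for $\lambda\neq 0,4$ at most a one-dimensional family of eigenvectors is supported on the path. I would first try to show that eigenvectors of $G$ vanishing on both $v_1$ and $v_k$ vanish on the path and hence restrict to eigenvectors of $G_1$. That subspace has dimension at least $m_G(\lambda)-2=2c(G_1)+q(G_1)-1$ when $k\ge2$. When $k\le1$, a single vertex condition suffices, giving the count directly; restriction is injective since a vector vanishing on all of $G_1$ and at the endpoints of the path is zero. The delicate check is that the restriction satisfies equation (1) at $v_0$ and $v_{k+1}$. This requires the contributions $x_{v_1}$ and $x_{v_k}$ to vanish, which is exactly what the imposed zeros give.
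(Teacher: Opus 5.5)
Your $s$-$p$- and $c$-$p$-deletion cases are exactly the paper's argument: Lemma 3.2 combined with the upper bound of Li et al.

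\textbf{The gap is in the $c$-deletion case.} The subspace you settle on is $\mathbb{V}^{\lambda}_{L(G)}\cap\mathbb{Z}^{\{v_1,v_k\}}_{L(G)}$, or $\mathbb{Z}^{\{v_1\}}_{L(G)}$ when $k=1$. It does not restrict into $\mathbb{V}^{\lambda}_{L(G_1)}$, for two reasons.

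First, zeros at $v_1$ and $v_k$ need not propagate along the path. On $v_2,\dots,v_{k-1}$, equation (1) reads $(2-\lambda)x_{v_i}=x_{v_{i-1}}+x_{v_{i+1}}$. With boundary values $x_{v_1}=x_{v_k}=0$, this has a nonzero solution whenever $\lambda$ is an eigenvalue of $U_{k-2}$. For example, with $k=4$ and $\lambda=3$, the values $-1,0,1,-1,0,1$ on $v_0,\dots,v_5$ satisfy (1) at every path vertex.

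Second, your ``delicate check'' treats $L$ like an adjacency matrix. Deleting the edge $v_0v_1$ also lowers the diagonal entry at $v_0$. Comparing (1) at $v_0$ in $G$ and in $G_1$ shows that the restriction satisfies (1) in $G_1$ at $v_0$ if and only if $x_{v_0}=x_{v_1}$, i.e.\ $x_{v_0}=0$. The vanishing of the neighbour contribution $x_{v_1}$ is not enough. For $k=1$, the single condition $x_{v_1}=0$ only yields $x_{v_0}=-x_{v_2}$. As a sanity check, if one condition sufficed you would get $m_{G_1}(\lambda)\ge m_G(\lambda)-1=2c(G_1)+q(G_1)$, which exceeds the Li et al.\ bound for a $\lambda$-optimal $G$.

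\textbf{The missing idea}, which is what the paper does, is to impose zeros at two \emph{adjacent} vertices of $v_0v_1\cdots v_{k+1}$, e.g.\ $v_0$ and $v_1$. Then (1) at $v_1,\dots,v_k$ successively forces $x_{v_2}=\cdots=x_{v_{k+1}}=0$, so $X$ vanishes on the whole path including both endpoints. At $v_0$ and $v_{k+1}$, the lost neighbour term and the degree drop are then both multiplied by zero, so $X|_{G_1}\in\mathbb{V}^{\lambda}_{L(G_1)}$. Restriction is injective because $X$ vanishes on the deleted vertices. The dimension formula then gives $m_{G_1}(\lambda)\ge m_G(\lambda)-2=2c(G_1)+q(G_1)-1$. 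Your pair $\{v_1,v_k\}$ is guaranteed to work only when $k=2$, where it happens to be an adjacent pair.
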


\begin{proof}
We consider each operation separately.

\medskip
\noindent \textbf{\( s \)-\( p \)-deletion.} First, we note that \( c(G) = c(G_1) \) and \( q(G) = q(G_1) + 1 \). By Lemma 3.2(1), we have
\[
m_{G_1}(\lambda) \ge m_G(\lambda) - 1 = 2c(G) + q(G) - 1 - 1 = 2c(G_1) + q(G_1) - 1.
\]
On the other hand, we know that
\[
m_{G_1}(\lambda) \le 2c(G_1) + 2q(G_1) - 1,
\]
so \( m_{G_1}(\lambda) = 2c(G_1) + 2q(G_1) - 1 \), which means that \( G_1 \) is \( \lambda \)-optimal.

\medskip
\noindent \textbf{\( c \)-\( p \)-deletion.} This case is analogous to the \( s \)-\( p \)-deletion case and follows directly from Lemma 3.2(2).

\medskip
\noindent \textbf{\( c\)-deletion.} Let
\[
v_0 \sim v_1 \sim v_2 \sim \cdots \sim v_k \sim v_{k+1}
\]
be a sequence of vertices on some cycle of \( G \), where \( d_G(v_0) \ge 3 \), \( d_G(v_{k+1)} \ge 3 \), and \( d_G(v_1) = d_G(v_2) = \cdots = d_G(v_k) = 2 \). Let \( X \in \mathbb{V}_{L(G)}^\lambda \cap \mathbb{Z}_{L(G)}^{\{v_0, v_1\}} \), and let \( x_{v_i} \) denote the component of \( X \) at vertex \( v_i \). 
Then by equation (1), we obtain
\[
x_{v_i} = 0 \quad \text{for } i \in \{0, 1, \dots, k, k+1\}.
\]
Hence \( X|_{G_1} \in \mathbb{V}_{L(G_1)}^\lambda \).

Using the dimension formula, we easily obtain
\[
\dim(\mathbb{V}_{L(G)}^\lambda) \le \dim(\mathbb{V}_{L(G)}^\lambda \cap \mathbb{Z}_{L(G)}^{\{v_0, v_1\}}) + 2 \le \dim(\mathbb{V}_{L(G_1)}^\lambda) + 2.
\]
Thus
\[
m_{G_1}(\lambda) \ge m_G(\lambda) - 2 = 2c(G) + q(G) - 1 - 2.
\]
Noting that \( c(G) = c(G_1) + 1 \) and \( q(G) = q(G_1) \), we have
\[
m_{G_1}(\lambda) \ge 2c(G_1) + q(G_1) - 1.
\]
Since \( m_{G_1}(\lambda) \le 2c(G_1) + q(G_1) - 1 \), it follows that
\[
m_{G_1}(\lambda) = 2c(G_1) + q(G_1) - 1,
\]
i.e., \( G_1 \) is \( \lambda \)-optimal.
\end{proof}

\begin{figure}[H]
  \centering
  \includegraphics[width=1.0\linewidth]{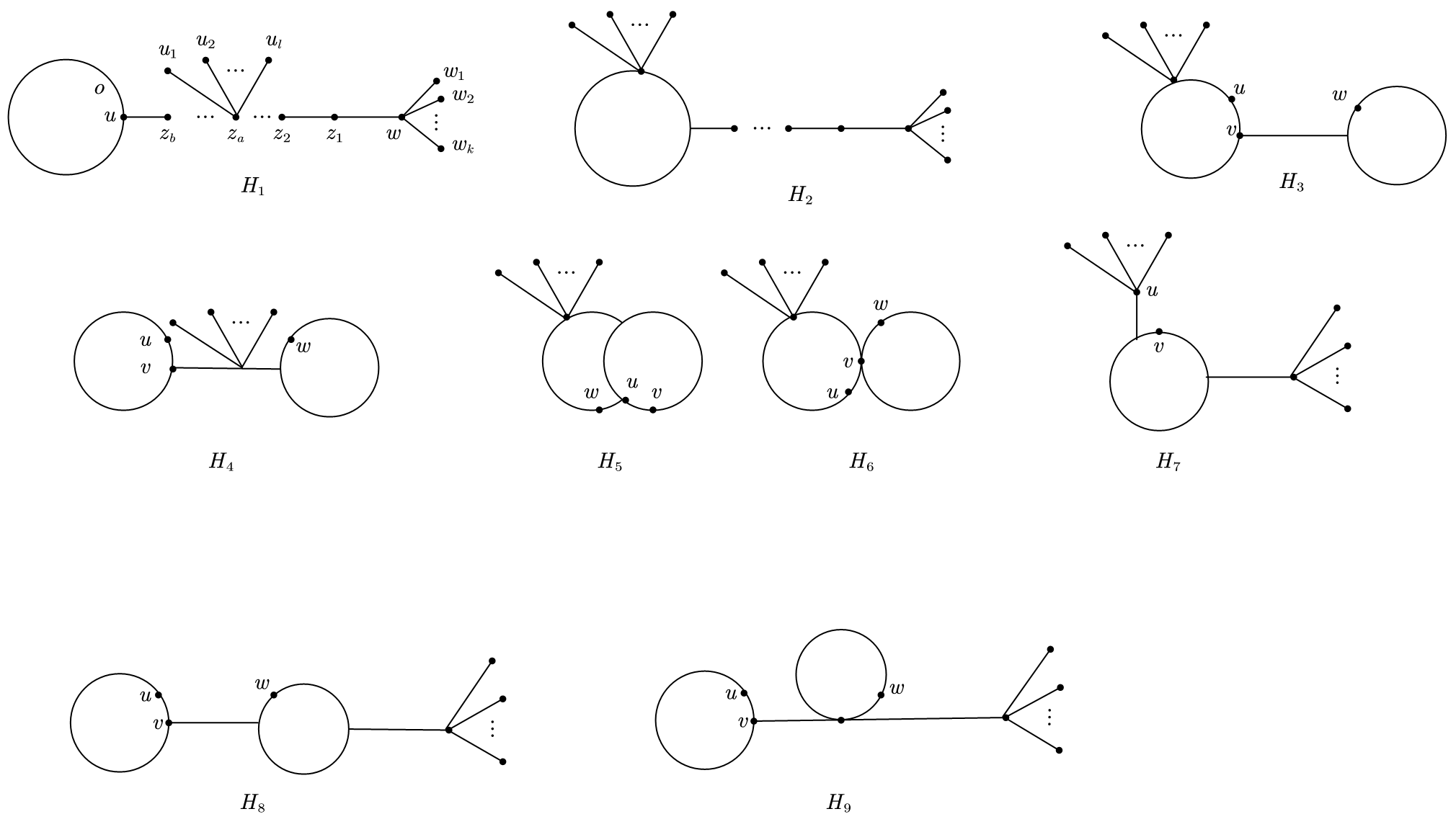}
\caption{$ c(G)+q(G)=3$}\label{figure4}
\end{figure}

\begin{Lemma}
Let \( \lambda \neq 1 \). If \( c(G) + q(G) \ge 3 \) and \( G \) is \( \lambda \)-optimal, then the following hold:

(1) If \( q(G) > 1 \), let \( G = F_{uv}(H, K_{1,k_i}; P_l) \), where \( d_G(u) \ge 3 \). Then \( l \ge 2 \); in other words, every \( K_{1,k_i} \) is a pendant structure.

(2) If \( c(G) > 1 \), then every cycle of \( G \) is a pendant cycle.
\end{Lemma}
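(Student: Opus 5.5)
Both parts will be proved by contradiction. The idea is to assume a "non-pendant" configuration, peel it off with one of the deletion operations of Lemma 3.8, and reduce to a small $\lambda$-optimal graph that Lemmas 3.6 and 3.7 (that is, Theorem 2.1) forbid.

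For (1), suppose $q(G)>1$ and some star $K_{1,k_i}$ is attached with $l=1$, so its center $u$ satisfies $d_G(u)\ge 3$ and is joined directly to the rest of $G$. Since $c(G)+q(G)\ge 3$, I will repeatedly apply $s$-$p$-deletions to the other pendant stars and $c$-$p$-deletions or $c$-deletions to the cycles, always keeping the star at $u$ and one cycle. By Lemma 3.8 each step preserves $\lambda$-optimality as long as the current graph still has $c+q\ge 3$. I stop exactly when $c+q=2$. At that point I plan to arrange the reduction so that the remaining graph is $F_{u,w}(C,K_{1,k};P_{l'})$, with the star attached as before. If the star's center is glued directly to the rest, the attachment is the $l'=1$ case, which Lemma 3.6 rules out. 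The computation behind this is in the proof of Lemma 3.6: it forces $y_{u_1}=y_{v_g}$ and then $\lambda y_{u_1}=0$, contradicting $\lambda\neq 0$. The delicate point is that the last deletion step starts from a graph with $c+q=3$, which is allowed by Lemma 3.8. I also need the attachment vertex $u$ to keep degree at least $2$ in $H$ after deletions; otherwise the path towards the star merely lengthens and the star becomes pendant, which changes the configuration. I will handle this by choosing which structures to delete so that the neighbourhood of $u$ is untouched. If $c(G)=0$ would arise, I instead stop at a tree and invoke the tree characterization from \cite{Li2026}, since trees attaining $q-1$ have pendant stars.

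For (2), suppose $c(G)>1$ and some cycle $C$ is not pendant. This means $C$ either has at least two vertices of degree $\ge 3$, or shares edges with another cycle (a $\theta$-type block). I will use $c$-deletion to break every other cycle and $s$-$p$-deletions to remove stars, again via Lemma 3.8, until $c+q=2$ with $c\ge 1$. In the $\theta$ situation I aim to reach $\theta(t,l,s)$, which is excluded by Case 2 of the proof of Lemma 3.7. If two cycles share a vertex, I aim to reach the coalescence $F_{uw}(C_{g_1},C_{g_2};P_1)$, excluded by Case 1 with $l=1$. If $C$ carries two distinct branch points leading to a star and something else, I instead first delete the other cycle parts. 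When a single cycle plus one star remains, Lemma 3.6 forces the cycle to be attached through a single vertex. Otherwise, I will directly use the eigenvector argument of Lemma 3.8's $c$-deletion: a vector in $\mathbb V^\lambda\cap\mathbb Z^{\{v_0,v_1\}}$ vanishes along the whole cycle segment, and comparing dimensions gives $m_G(\lambda)\le m_{G_1}(\lambda)+1$, which is strictly less than optimal.

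I expect the main obstacle to be the bookkeeping in the reduction. Each deletion must keep $c+q\ge 3$ until the final step, must not destroy the offending configuration, and must not change degrees at the relevant attachment vertices; in particular, $c$-deletion could create new pendant vertices or merge paths. I will first try to order the deletions so that structures farthest from the offending star or cycle are removed first, arguing by induction on $c(G)+q(G)$ with base case $c+q=3$ checked against Figure 4's configurations.
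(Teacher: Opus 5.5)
Your reductions work when the defect survives a deletion. If a star centre lies on a cycle and another pendant structure can be removed, you land on $F(C,K_{1,k};P_1)$, which Lemma 3.6 excludes. If two cycles share a vertex or edges, you land on a coalescence or on $\theta(t,l,s)$, which Lemma 3.7 excludes; the paper also uses Lemma 3.7 there.

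\textbf{The gap: irreducible base configurations.} With $c(G)+q(G)=3$ there are graphs where every available deletion either removes the defect or leaves a graph that Theorem 2.1 or the tree result allows to be optimal. Take a triangle with vertex $x$ and a path $x z_1 u$. Give $u$ a pendant vertex, then add a path $u y_1 y_2 w$ and a pendant vertex at $w$. Here $c=1$, $q=2$, and the star at $u$ is non-pendant.
\begin{itemize}
\item Deleting the star at $w$ gives $F_{x,u}(C_3,K_{1,1};P_3)$.
\item Deleting the pendant vertex at $u$ (the $l=1$ case of $s$-$p$-deletion) gives $F_{x,w}(C_3,K_{1,1};P_6)$.
\item Deleting the cycle gives $P_6$.
\end{itemize}
For $\lambda=3$ all three are $\lambda$-optimal, by Lemma 3.6 and $m_{P_6}(3)=1$. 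So no contradiction can come out of your scheme, yet $G$ is not $3$-optimal (shown below).

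The same obstruction occurs for (1) when the star centre sits on the path joining two pendant cycles. It also occurs for (2) with two disjoint cycles where $C_{g_1}$ branches both to $C_{g_2}$ and to a pendant star. In these graphs you cannot choose deletions ``so that the neighbourhood of $u$ is untouched''.

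\textbf{The fallback does not give $+1$.} Zeroing $\{v_0,v_1\}$ in the $c$-deletion argument gives only $m_G(\lambda)\le m_{G_1}(\lambda)+2$. The right-hand side is at most $2c(G_1)+q(G_1)+1=2c(G)+q(G)-1$, so there is no contradiction. Zeroing a single vertex does not propagate at all. So the inequality $m_G(\lambda)\le m_{G_1}(\lambda)+1$ is not justified.

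\textbf{The missing idea.} You need the paper's direct base-case argument for these irreducible graphs. Choose a vertex set $S$ with $|S|<2c(G)+q(G)-1$ such that equation (1) forces every $X\in\mathbb{V}^\lambda_{L(G)}\cap\mathbb{Z}^S_{L(G)}$ to vanish. Three propagation rules do the work:
\begin{itemize}
\item a zero at a star centre kills its pendant entries, since $\lambda\neq 1$;
\item zeros travel along vertices of degree $2$;
\item one extra zero next to the attachment vertex of a cycle kills the whole cycle.
\end{itemize}
The dimension formula then gives $m_G(\lambda)\le |S|$. In the example, take $S=\{w,c\}$ with $c$ a triangle neighbour of $x$. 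The zeros run $w\to p'\to y_2\to y_1\to u\to p\to z_1\to x$ and then around the triangle, so $m_G(\lambda)\le 2<3$ for every $\lambda\neq1$. In the two-cycle configurations a suitable three-vertex $S$ gives $m_G(\lambda)\le 3<4$.

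Only after these base cases are settled can the reduction via Lemma 3.8 carry the induction.
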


\begin{proof}
We proceed by induction on \( c(G) + q(G) \).

\medskip
\noindent \textbf{Case 1:} \( c(G) + q(G) = 3 \).

We first prove that every \( K_{1,k_i} \) is a pendant-\( K_{1,k_i} \). We divide the proof into the following subcases.

\medskip
\noindent \textbf{Case 1.1:} \( c(G) = 0 \). Then \( G \) is a tree, and the conclusion is obvious from the result of \cite{Li2026}.

\medskip
\noindent \textbf{Case 1.2:} \( c(G) = 1 \). Suppose that there exists a \( K_{1,k_1} \) which is not a pendant structure. Then \( G \) is isomorphic to \( H_1 \) or \( H_2 \) as shown in Figure 4. We show that in this case \( G \) is not \( \lambda \)-optimal.

Assume first that \( G \) is isomorphic to \( H_1 \) in Figure 3. Let \( X \in \mathbb{V}_{L(G)}^\lambda \cap \mathbb{Z}_{L(G)}^{\{o,w\}} \), and let \( x_{v_i} \) denote the component of \( X \) at vertex \( v_i \). By equation (1), we have
\[
(\lambda - 1)x_{w_j} = x_w = 0,
\]
so \( x_{w_j} = 0 \) for \( 1 \le j \le k \). Similarly,
\[
(d_w - \lambda)x_w = \sum_{1 \le j \le k} x_{w_j} + x_{z_1},
\]
hence \( x_{z_1} = 0 \). Continuing this argument, we obtain \( x_v = 0 \) for every vertex \( v \in V(G) \), i.e., \( X = 0 \). Therefore, by the dimension formula, we easily get
\[
\dim(\mathbb{V}_{L(G)}^\lambda) \le \dim(\mathbb{V}_{L(G)}^\lambda \cap \mathbb{Z}_{L(G)}^{\{o,w\}}) + 2 = 2 < 2c(G) + q(G) - 1 = 3,
\]
so \( G \) is not \( \lambda \)-optimal.

If \( G \) is isomorphic to \( H_2 \) in Figure 4, a similar argument shows that \( G \) is not \( \lambda \)-optimal.

\medskip
\noindent \textbf{Case 1.3:} \( c(G) = 2 \). Suppose that there exists a \( K_{1,k_1} \) which is not a pendant structure. Then \( G \) is isomorphic to one of \( H_3, H_4, H_5, H_6 \) in Figure 3. Let
\[
X \in \mathbb{V}_{L(G)}^\lambda \cap \mathbb{Z}_{L(G)}^{\{u,v,w\}}.
\]
Similar to the discussion in Case 1.2, we easily obtain \( X = 0 \). Combining this with the dimension formula, we have
\[
\dim(\mathbb{V}_{L(G)}^\lambda) \le \dim(\mathbb{V}_{L(G)}^\lambda \cap \mathbb{Z}_{L(G)}^{\{u,v,w\}}) + 3 = 3 < 2c(G) + q(G) - 1 = 4,
\]
so \( G \) is not \( \lambda \)-optimal.

\medskip
\noindent We now prove that every cycle is a pendant cycle.

\medskip
\noindent \textbf{Case 1.4:} \( c(G) = 1 \). If the unique cycle of \( G \) is not a pendant cycle, then \( G \cong H_7 \). Let \( X \in \mathbb{V}_{L(G)}^\lambda \cap \mathbb{Z}_{L(G)}^{\{u,v\}} \). Similarly, we obtain \( X = 0 \), i.e.,
\[
m_G(\lambda) \le 2 < 2c(G) + q(G) - 1 = 3,
\]
so \( G \) is not \( \lambda \)-optimal.

\medskip
\noindent \textbf{Case 1.5:} \( c(G) = 2 \). Let \( C_{g_1} \) and \( C_{g_2} \) be two distinct cycles of \( G \). Suppose that \( C_{g_1} \) is a non-pendant cycle. 
If \( C_{g_1} \) and \( C_{g_2} \) have at least one common vertex, then by applying an \( s \)-\( p \)-deletion to \( G \), we obtain \( G_1 \) such that
\[
G_1 \cong F_{uv}(C_{g_1}, C_{g_2}; P_1) \quad \text{or} \quad G_1 \cong \theta(t,s,l).
\]
By Lemma 3.7, we know that \( G_1 \) is not \( \lambda \)-optimal, contradicting Lemma 3.8.

If \( C_{g_1} \) and \( C_{g_2} \) have no common vertex, then \( G \cong H_8 \) or \( H_9 \). Let
\[
X \in \mathbb{V}_{L(G)}^\lambda \cap \mathbb{Z}_{L(G)}^{\{u,v,w\}}.
\]
Then we have \( X = 0 \), i.e.,
\[
m_G(\lambda) \le 3 < 2c(G) + q(G) - 1 = 4,
\]
so \( G \) is not \( \lambda \)-optimal, a contradiction.

\medskip
\noindent \textbf{Case 1.6:} \( c(G) = 3 \). Suppose that \( G \) has a non-pendant cycle. Then by applying a \( c \)-\( p \)-deletion or a \( c \)-deletion to \( G \), we obtain \( G_1 \) which still contains a non-pendant cycle, and
\[
G_1 \cong F_{uv}(C_{g_1}, C_{g_2}; P_1) \quad \text{or} \quad G_1 \cong \theta(t,s,l).
\]
By Lemma 3.7, we know that \( G_1 \) is not \( \lambda \)-optimal, contradicting Lemma 3.8.

\medskip
\noindent Thus the conclusion holds when \( c(G) + q(G) = 3 \).

\noindent \textbf{Case 2:} \( c(G) + q(G) \geq 3 \).

\medskip
\noindent Assume that the conclusion holds for \( c(G) + q(G) = k \ge 3 \). We now prove that it holds for \( c(G) + q(G) = k + 1 \).

\medskip
\noindent \textbf{Step 1:} Apply a \( c \)-\( p \)-deletion, a \( c \)-deletion, or an \( s \)-\( p \)-deletion to \( G \) to obtain \( G_1 \) such that \( c(G_1) + q(G_1) = k \). By Lemma 3.8, \( G_1 \) is \( \lambda \)-optimal. Hence all cycles of \( G_1 \) are pendant cycles,  and all \( K_{1,k_i} \) in \( G_1 \) are pendant-\( K_{1,k_i} \).

\medskip
\noindent \textbf{Step 2:} Apply a \( c \)-\( p \)-deletion, a \( c \)-deletion, or an \( s \)-\( p \)-deletion to \( G \) to obtain \( G_2 \neq G_1 \) such that \( c(G_2) + q(G_2) = k \). By Lemma 3.8, \( G_2 \) is \( \lambda \)-optimal. Hence all cycles of \( G_2 \) are pendant cycles, and all \( K_{1,k_j} \) in \( G_2 \) are pendant-\( K_{1,k_j} \).

Combining Steps 1 and 2, we conclude that all cycles of \( G \) are pendant cycles, and all \( K_{1,k_i} \) are pendant-\( K_{1,k_i} \).
\end{proof}

\section{Proof of results}

\begin{figure}[H]
  \centering
  \includegraphics[width=1.0\linewidth]{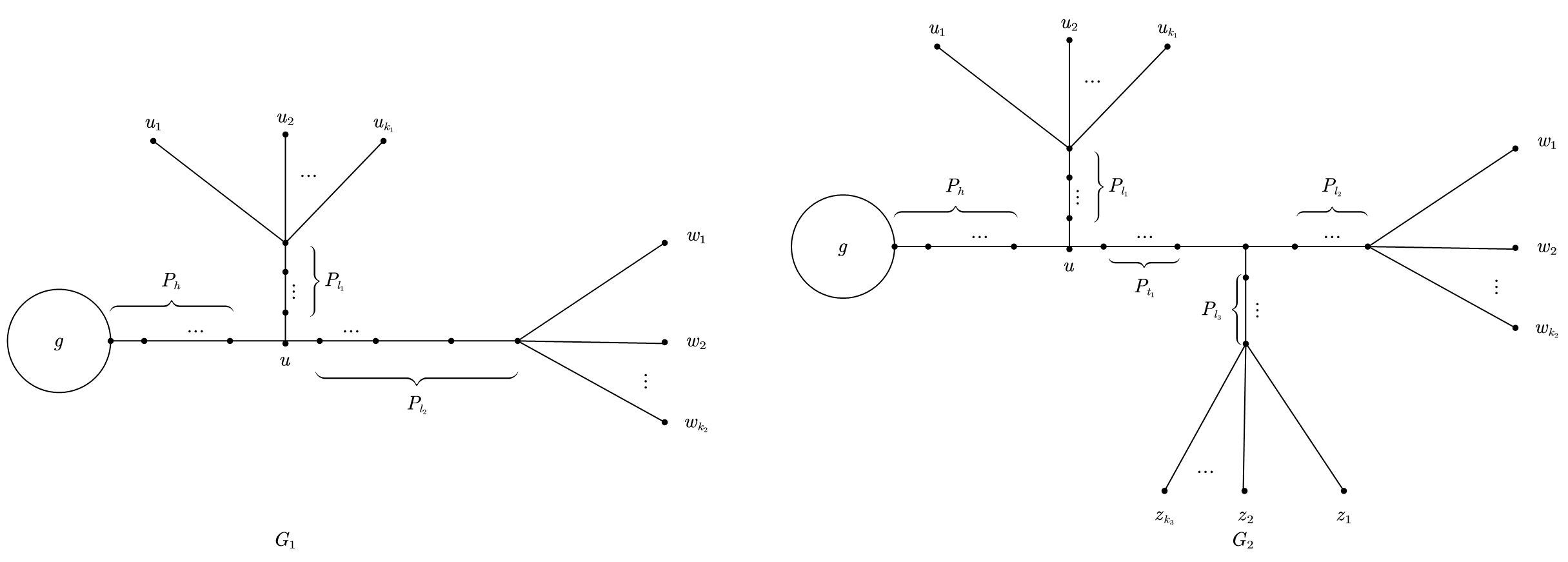}
\caption{$ G_1$ and $G_2$}\label{figure5}
\end{figure}
To prove Theorem 2.2, we first consider the case \( c(G) = 1 \), which is established in Lemma 4.1. We then proceed by induction on \( c(G) \) to complete the proof of Theorem 2.2.

\begin{Lemma}
Let \( G \) be a unicyclic graph, \( \lambda \neq 1 \), and \( c(G) + q(G) \ge 3 \). Suppose
\[
G - \Gamma(G) = C(g,h) \cup \mathcal{T}_G \cup \mathcal{P}_G,
\]
where \( \mathcal{T}_G \) is the union of \( T(k_i, l_i) \) for \( 1 \le i \le q \), and \( \mathcal{P}_G \) is the union of \( P_{t_j} \) for \( 1 \le j \le |\Gamma(G)| - 1 \). 
Then \( G \) is \( \lambda \)-optimal if and only if
\[
\begin{aligned}
&m_{Q_{g,h}}(\lambda) = 2 , \\
&m_{B_{k_i,l_i}}(\lambda) = 1 &&(1 \le i \le q(G)), \\
&m_{U_{t_j}}(\lambda) = 1 &&(1 \le j \le |\Gamma(G)| - 1).
\end{aligned}
\]
\end{Lemma}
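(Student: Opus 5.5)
We will prove Lemma 4.1 by induction on \(q(G)\), using the deletion operations of Lemma 3.8 and the structural conclusion of Lemma 3.9. Since \(c(G)=1\) and \(G\) is \(\lambda\)-optimal exactly when \(m_G(\lambda)=q(G)+1\), Lemma 3.9 lets us assume the unique cycle hangs as a pendant-\(C(g,h)\) and every star hangs as a pendant-\(T(k_i,l_i)\). The skeleton is therefore a tree \(R\) whose vertex set is \(\Gamma(G)\). Its edges are the paths \(P_{t_j}\) (\(|\Gamma(G)|-1\) of them), and each pendant piece \(C(g,h)\) or \(T(k_i,l_i)\) is attached by a single edge to a vertex of \(\Gamma(G)\).

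\textbf{Necessity.} Delete from \(G\) the \(|\Gamma(G)|-1\) edges joining each \(P_{t_j}\) to one of its endpoints in \(\Gamma(G)\), and also the \(1+q(G)\) edges joining the pendant pieces to \(\Gamma(G)\). It is cleaner to cut at the vertices of \(\Gamma(G)\) directly. If \(X\) is a \(\lambda\)-eigenvector vanishing on \(\Gamma(G)\), then its restriction to each component of \(G-\Gamma(G)\) is a \(\lambda\)-eigenvector of the corresponding principal submatrix \(Q_{g,h}\), \(B_{k_i,l_i}\) or \(U_{t_j}\). This holds because the attachment edges only add \(1\) to the diagonal entry at the attaching vertex, which is exactly how these matrices are defined.

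The dimension formula then gives
\[
m_G(\lambda)\le |\Gamma(G)| + m_{Q_{g,h}}(\lambda)+\sum_i m_{B_{k_i,l_i}}(\lambda)+\sum_j m_{U_{t_j}}(\lambda) - (\text{constraints}).
\]
This crude bound needs refinement. The sharper route is to count via the kernel of the restriction map, giving
\[
m_G(\lambda)\le \dim\{X: X|_{\Gamma}=0\} + |\Gamma(G)|.
\]
The block bounds are \(m_{Q}\le 2\) (the first step of Lemma 3.4) and \(m_B,m_U\le 1\) (tridiagonal-type, by the same zero-propagation argument from an endpoint). Together these give \(m_G(\lambda)\le 2+q+(|\Gamma|-1)+|\Gamma|\), which is too weak. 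So I would instead argue by induction. Perform an \(s\)-\(p\)-deletion (when \(q\ge 2\)) or a \(c\)-\(p\)-deletion. By Lemma 3.8 the result is again \(\lambda\)-optimal, so the induction hypothesis, or Theorem 2.1/Lemma 3.6 in the base case \(c+q=2\), yields the conditions for all remaining blocks. Since \(q\ge 2\), there are at least two different deletions removing different pendant pieces. Combining them covers every block, including the paths \(U_{t_j}\): a path between two \(\Gamma\)-vertices becomes part of a \(B\) or \(Q\) block after a deletion, and Lemma 3.3 and Lemma 3.4 then convert conditions on \(B_{k,l}\) and \(Q_{g,l}\) into \(m_{U_t}(\lambda)=1\). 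When a \(\Gamma\)-vertex loses degree after deletion, \(P_{t_j}\) merges into a \(T\)- or \(C\)-branch. I expect this bookkeeping, translating the conditions on merged blocks back into conditions on the separate \(B\), \(Q\) and \(U\) blocks, to be the main obstacle. Lemma 3.3 (no common \(\lambda\) for \(B_{k,l}\) and \(B_{k,l-1}\)) and Lemma 3.4 are exactly the tools for it.

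\textbf{Sufficiency.} Delete the edges between \(\Gamma(G)\) and the pendant pieces and paths so that every vertex of \(\Gamma(G)\) becomes isolated or nearly isolated. Then use the hypotheses with Lemma \ref{lem:PW} (the \(F_{uv}(G,H;P_2)\) multiplicity formula) to add the pieces back one at a time. Lemma 3.4 and Lemma 3.3 supply the hypothesis \(m_{L_v(\cdot)}=m_{L_u(\cdot)}+1\) of Lemma \ref{lem:PW} at each gluing. Each gluing at a \(\Gamma\)-vertex then costs exactly one, and the total comes to
\[
2+q+(|\Gamma|-1)-(|\Gamma|-1)=q+1=2c(G)+q(G)-1.
\]
The upper bound \(m_G(\lambda)\le 2c+q-1\) from \cite{Li2026} then gives equality.
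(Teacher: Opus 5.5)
Your direction ``$\lambda$-optimal $\Rightarrow$ block conditions'' follows the paper's scheme: pass to reductions via Lemma 3.8, compare two different deletions, and split merged blocks. However, the base of your induction is a genuine gap, not bookkeeping. For $q(G)=2$ there is a single $\Gamma$-vertex $u$ carrying $C(g,h)$, $T(k_1,l_1)$ and $T(k_2,l_2)$, and every deletion lowers $d(u)$ to $2$. Theorem 2.1/Lemma 3.6 (or the tree result) then only give information about $L(C_g)$ and about merged branches. Optimality of all reductions does \emph{not} imply the block conditions.

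Concretely, let $\lambda=3$ and join $u$ to the $2$-vertex of $C(3,2)$ and to the $2$-vertices of two copies of $T(2,2)$. Both $s$-$p$-deletions give $F(C_3,K_{1,2};P_5)$, which is $3$-optimal by Lemma 3.6 ($m_{C_3}(3)=2$, $m_{T(2,4)}(3)=1$). The $c$-$p$-deletion gives a tree with $q=2$ and $m(3)=1$. Yet $m_{Q_{3,2}}(3)=1$, $m_{B_{2,2}}(3)=0$, and in fact $m_G(3)=1$.

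So the base case must use the optimality of $G$ itself, as the paper does at $q=2$. Suppose both $B_{k_i,l_i}$ miss $\lambda$. Then any $\lambda$-eigenvector with $x_u=0$ vanishes on both star branches and, by the equation at $u$, at the attachment vertex of the cycle branch. It therefore restricts injectively to a $\lambda$-eigenvector of $Q_{g,h}$ vanishing there. Such vectors span a space of dimension at most $1$: trivially if $m_{Q_{g,h}}(\lambda)\le 1$, and by Lemma 3.4 otherwise. Hence $m_G(\lambda)\le 2$. (Plain interlacing at $u$ only gives $m_G(\lambda)\le m_{Q_{g,h}}(\lambda)+1$, so this extra step is needed.) Thus, say, $m_{B_{k_1,l_1}}(\lambda)=1$. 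Lemma 3.3 gives $m_{B_{k_1,l_1-1}}(\lambda)=0$, which is exactly the hypothesis of Lemma 3.5. Then $3=m_G(\lambda)=m_{Q_{g,h}}(\lambda)+m_{B_{k_2,l_2}}(\lambda)$ forces $m_{Q_{g,h}}(\lambda)=2$ and $m_{B_{k_2,l_2}}(\lambda)=1$. The same pairing of Lemma 3.3 with Lemma 3.5, not Lemmas 3.3/3.4 alone, is what splits a merged block in the inductive step. For example, $m_{B_{k,l+t+1}}(\lambda)=m_{U_t}(\lambda)$ once the other deletion gives $m_{B_{k,l}}(\lambda)=1$.

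For the converse, your count is wrong as written: $2+q+(|\Gamma|-1)-(|\Gamma|-1)=q+2=2c(G)+q(G)$, which Li et al.\ exclude for such $G$. If each $\Gamma$-vertex costs one, you must subtract $|\Gamma(G)|$. More to the point, no gluing is needed, because the block decomposition you wrote down for the other direction already finishes this one. Deleting the rows and columns of $\Gamma(G)$ from $L(G)$ leaves $Q_{g,h}\oplus\bigoplus_i B_{k_i,l_i}\oplus\bigoplus_j U_{t_j}$. Under the hypotheses its $\lambda$-multiplicity is $2+q(G)+|\Gamma(G)|-1$. Cauchy interlacing then gives $m_G(\lambda)\ge q(G)+1$, and the known upper bound gives equality; this is the paper's proof. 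Your gluing route can be repaired with the corrected count, but it applies Lemma 3.5 to diagonally perturbed Laplacians. Whenever a path block is glued, it also needs $m_{U_{t-1}}(\lambda)=0$, which comes from $2I-A(P_t)$ and $2I-A(P_{t-1})$ having disjoint spectra, not from Lemmas 3.3/3.4.
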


\begin{proof}
We first prove the sufficiency. We proceed by induction on \( q(G) \).

When \( q(G) = 2 \), we have \( G \cong G_1 \) as shown in Figure 5. If
\[
m_{B_{k_1,l_1}}(\lambda) = m_{B_{k_2,l_2}}(\lambda) = 0,
\]
then by the interlacing theorem,
\[
m_G(\lambda) \le m_{L_u(G)}(\lambda) + 1
= m_{Q_{g,l}}(\lambda) + m_{B_{k_1,l_1}}(\lambda) + m_{B_{k_2,l_2}}(\lambda)
= m_{Q_{g,l}}(\lambda) \le 2
< 2c(G) + q(G) - 1 = 3,
\]
a contradiction. Thus, without loss of generality, assume \( m_{B_{k_1,l_1}}(\lambda) = 1 \). 
Lemma 3.3 then gives \( m_{B_{k_3,l_3-1}}(\lambda) = 0 \). Consequently, by Lemma 3.5,
\[
m_G(\lambda) = m_{B_{k_2,l_2}}(\lambda) + m_{Q_{g,l}}(\lambda) = 3.
\]
Since \( m_{B_{k_2,l_2}}(\lambda) \le 1 \) and \( m_{Q_{g,l}}(\lambda) \le 2 \), we obtain
\[
m_{B_{k_2,l_2}}(\lambda) = 1, \qquad m_{Q_{g,l}}(\lambda) = 2.
\]
Thus the conclusion holds.

Assume that the conclusion holds for \( q(G) = k \ge 2 \). We now prove it for \( q(G) = k + 1 \).

\medskip
\noindent \textbf{Case 1.} \( G \cong G_2 \), where \( G_2 \) is as shown in Figure 5.

\medskip
\noindent \textbf{Step 1:} Apply an \( s \)-\( p \)-deletion to \( G \) by removing \( T(k_2, l_2) \), obtaining \( G'_2 \). By Lemma 3.8, \( G'_2 \) is \( \lambda \)-optimal and \( q(G'_2) = k \). Hence
\[
m_{Q_{g,l}}(\lambda)=2, \qquad m_{B_{k_1,l_1}}(\lambda) = 1, \qquad m_{B_{k_3,l_3+t_1+1}}(\lambda) = 1.
\]

\medskip
\noindent \textbf{Step 2:} Apply an \( s \)-\( p \)-deletion to \( G \) by removing \( T(k_1, l_1) \), obtaining \( G''_2 \). Again, \( G''_2 \) is \( \lambda \)-optimal and \( q(G''_2) = k \). Hence
\[
m_{B_{k_3,l_3}}(\lambda) = 1, \qquad m_{B_{k_2,l_2}}(\lambda) = 1.
\]
Since \( m_{B_{k_3,l_3}}(\lambda) = 1 \), by Lemma 3.3 we have \( m_{B_{k_3,l_3-1}}(\lambda) = 0 \). Then by Lemma 3.5,
\[
m_{B_{k_3,l_3+t_1+1}}(\lambda) = m_{U_{t_1}}(\lambda) = 1,
\]
as desired.

\medskip
\noindent \textbf{Case 2.} \( G \not\cong G_2 \). Similar to Case 1, we proceed in two steps.

\medskip
\noindent \textbf{Step 1:} Apply an \( s \)-\( p \)-deletion to \( G \) by removing \( T(k_1, l_1) \), obtaining \( G' \) such that \( q(G') = k \). By Lemma 3.7, \( G' \) is \( \lambda \)-optimal, so the conclusion holds for \( G' \).

\medskip
\noindent \textbf{Step 2:} Apply an \( s \)-\( p \)-deletion to \( G \) by removing \( T(k_2, l_2) \), obtaining \( G'' \) such that \( q(G'') = k \). The conclusion also holds for \( G'' \).

It is worth noting that since \( G \not\cong G_2 \), by Steps 1 and 2, we can always ensure that \( C(g,h) \) is either a pendant-\( C(g,h) \) in \( G' \) or a pendant-\( C(g,h) \) in \( G'' \).
Similarly, for every pendant-\( T(k_i, l_i) \) in \( G \), it is either a pendant-\( T(k_i, l_i) \) in \( G' \) or a pendant-\( T(k_i, l_i) \) in \( G'' \). The same conclusion holds for the internal paths in \( \mathcal{P}_G \). Therefore the desired conclusion follows.

We now prove the necessity. By the interlacing theorem, we have
\[
\begin{aligned}
m_G(\lambda) 
&\ge m_{L_{\Gamma(G)}}(\lambda) - |\Gamma(G)| \\
&= m_{Q(g,l)} + \sum_{1 \le i \le q(G)} m_{B(k_i,l_i)}(\lambda) 
   + \sum_{1 \le j \le |\Gamma(G)|-1} m_{U_{t_j}}(\lambda) - |\Gamma(G)| \\
&= 2 + q(G) + |\Gamma(G)| - 1 - |\Gamma(G)| \\
&= 2 + q(G) - 1.
\end{aligned}
\]
On the other hand,
\[
m_G(\lambda) \le 2c(G) + q(G) - 1 = 2 + q(G) - 1.
\]
Therefore \( m_G(\lambda) = 2c(G) + q(G) - 1 \), which means that \( G \) is \( \lambda \)-optimal.

\end{proof}

\begin{figure}[H]
  \centering
  \includegraphics[width=1.0\linewidth]{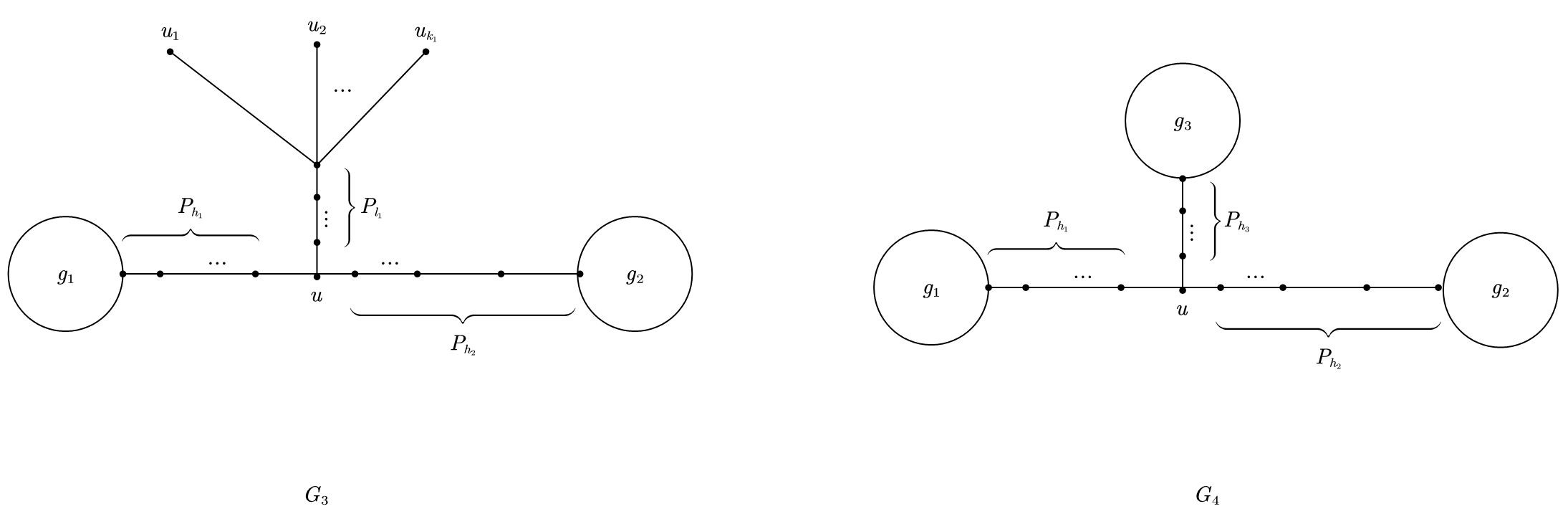}
\caption{$ G_3$ and $G_4$}\label{figure6}
\end{figure}

When \( c(G) + q(G) \ge 3 \), by Lemma 4.1, we know that if a unicyclic graph \( G \) is \( \lambda \)-optimal, then \( m_{Q_{g,h}}(\lambda) = 2 \), where \( Q_{g,h} \) is defined as in Lemma 4.1. We now aim to determine the necessary and sufficient conditions for \( m_{Q_{g,h}}(\lambda) = 2 \). The following lemma provides the answer.

\begin{Lemma}

Let \( G = C(g,l+1) \) and \( \lambda \neq 1 \). Then \( m_{Q(g,l)}(\lambda) = 2 \) if and only if \( m_{C_g}(\lambda) = 2 \), \( l > 2 \), and \( m_{L_v(P_l)}(\lambda) = 1 \), where \( v \) is an arbitrary pendant vertex of \( P_l \).
\end{Lemma}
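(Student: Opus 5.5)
The plan is a direct eigenvector analysis of \(Q_{g,l}\); it does not go through the recursions (2)--(3). Label the graph as in Figure~1. Let \(u\) be the cycle vertex carrying the path, and let \(u_1,\dots,u_l\) be the path vertices, with \(u_1\sim u\). In \(Q_{g,l}\) the row of \(u_l\) has diagonal entry \(2\), because its neighbour, the \((l+1)\)-vertex, has been deleted. We may therefore write the equations of \(Q_{g,l}Y=\lambda Y\) with a ghost value \(y_{u_{l+1}}=0\). Split the eigenspace \(\mathbb{V}^{\lambda}_{Q_{g,l}}\) by the linear functional \(Y\mapsto y_{u_l}\).

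\textbf{Step 1: the kernel \(W_1=\{Y:\ y_{u_l}=0\}\).} Equation (1), applied successively at \(u_l,u_{l-1},\dots,u_1\), forces \(y_{u_l}=\dots=y_{u_1}=0\). The equation at \(u_1\) then gives \(y_u=0\). So \(Y|_{C_g}\) is an eigenvector of \(L(C_g)\) that vanishes at \(u\). Arguing as in Lemma~3.6 via \(L_u(C_g)=2-A(P_{g-1})\), we get \(\dim W_1\le 1\). Moreover, \(\dim W_1=1\) holds exactly when \(\lambda\neq 0,4\) is an eigenvalue of \(C_g\), i.e.\ when \(m_{C_g}(\lambda)=2\); in that case the "sine-shaped" eigenvector vanishing at \(u\) works. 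Since the quotient by \(W_1\) has dimension at most \(1\), this also reproves \(m_{Q_{g,l}}(\lambda)\le 2\). Hence \(m_{Q_{g,l}}(\lambda)=2\) if and only if \(m_{C_g}(\lambda)=2\) and, in addition, some eigenvector \(Y\) has \(y_{u_l}=1\).

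\textbf{Step 2: the vector with \(y_{u_l}=1\).} Assume \(m_{C_g}(\lambda)=2\). Starting from \(y_{u_{l+1}}=0\) and \(y_{u_l}=1\), the path values \(y_{u_l},\dots,y_{u_1}\) and then \(y_u\) are uniquely determined by the three-term (Chebyshev) recurrence. It remains to extend to the cycle. Because \(d(u)=3\), the equation at \(u\) reads
\[
\bigl((L(C_g)-\lambda I)\,y|_{C_g}\bigr)_u=y_{u_1}-y_u ,
\]
while all other cycle rows give \(0\). Since \(\lambda\) is an eigenvalue of the symmetric matrix \(L(C_g)\), this system is solvable if and only if its right-hand side is orthogonal to the \(\lambda\)-eigenspace of \(C_g\). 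That eigenspace contains the "cosine-shaped" eigenvector, which is nonzero at \(u\). So solvability is equivalent to \(y_{u_1}=y_u\). When this holds, we can take \(y|_{C_g}\) to be an eigenvector of \(C_g\) with prescribed value \(y_u\), which is possible because some eigenvector is nonzero at \(u\).

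\textbf{Step 3: translating \(y_{u_1}=y_u\).} Substitute \(y_{u_1}=y_u\) into the equation at \(u_1\). It becomes \((1-\lambda)y_{u_1}=y_{u_2}\), which is exactly the equation at a pendant vertex. Thus \((y_{u_1},\dots,y_{u_l})\) is a nonzero eigenvector of the path matrix that has diagonal \(1\) at one end and diagonal \(2\) at the other (Dirichlet) end. This matrix is \(L_v\) of a path with \(v\) a pendant vertex, and its eigenvalues are simple, being of the form \(2-2\cos\frac{(2k-1)\pi}{2N-1}\). Conversely, any such eigenvector, scaled so that \(y_{u_l}=1\), together with the cycle extension from Step 2, yields the required vector. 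The conditions "\(l>2\)" and "\(m_{L_v(P_l)}(\lambda)=1\)" in the statement should come out of this step once the path length is matched to the paper's indexing of \(C(g,l+1)\). Small \(l\) must be checked separately, since there the pendant-type path matrix is too short to carry the eigenvalue \(\lambda\neq1\), \(\lambda\) an eigenvalue of \(C_g\) with \(\lambda\ne 0,4\).

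I expect the main obstacle to be Step 2, specifically the Fredholm/orthogonality argument. It needs a \(\lambda\)-eigenvector of \(C_g\) that is nonzero at \(u\), which is available precisely because \(m_{C_g}(\lambda)=2\). It also needs care in getting the index bookkeeping of the path right, so that the condition matches \(L_v(P_l)\) and \(l>2\) exactly as stated.
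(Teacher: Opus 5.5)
Your argument is correct in substance, but it takes a different route from the paper. The paper only says the proof mirrors Lemma 3.6, which would go in three steps:
\begin{enumerate}
\item An eigenvector of $Q_{g,l}$ vanishing on the path restricts to an eigenvector of $L(C_g)$ vanishing at $u$. Hence $\lambda\in\sigma(L_u(C_g))$, so $\lambda\neq 0,4$ and $m_{C_g}(\lambda)=2$.
\item Interlacing gives an eigenvector with a cycle neighbour of $u$ deleted. This is computed explicitly as $a(\eta^j-\eta^{-j})$ along the cycle, which forces $y_u=y_{u_1}$, so the path part is an eigenvector of $L_v(P_l)$.
\item Sufficiency comes from deleting the edge $uu_1$, which leaves $L(C_g)\oplus L_v(P_l)$, and using edge interlacing ($m\ge 2+1-1$), with the upper bound $2$ from Lemma 3.4.
\end{enumerate}
You instead split the eigenspace by the single functional $Y\mapsto y_{u_l}$. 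The kernel yields both the condition $m_{C_g}(\lambda)=2$ and the bound $m\le 2$. The Fredholm alternative at the junction then shows that $y_{u_1}=y_u$ is exactly the solvability condition. This gives both directions in one pass, with no interlacing and no explicit trigonometric eigenvectors beyond the simplicity of $L_u(C_g)=2I-A(P_{g-1})$. The paper's route is shorter only because it recycles machinery already set up for Lemma 3.6.

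The index matching you left open does need to be done, and it resolves as follows.
\begin{itemize}
\item In the paper, $C(g,l+1)$ is built from $P_{l+1}$, one of whose vertices lies on the cycle. So it has only $l$ off-cycle vertices, and in your labeling the deleted $(l+1)$-vertex is $u_l$ itself, not a ghost $u_{l+1}$. The paper's claim that $Q_{g,l}$ has order $g+l$ is a typo; the true order is $g+l-1$.
\item Hence the hanging path in $Q_{g,l}$ is $u_1\cdots u_{l-1}$. Your pendant/Dirichlet matrix is then $(l-1)\times(l-1)$, which is exactly $L_v(P_l)$.
\item The condition $l>2$ follows automatically from $\lambda\neq 1$, since $L_v(P_1)$ is empty and $L_v(P_2)=(1)$.
\item For $l=1$ there is no hanging vertex, so use the functional $Y\mapsto y_u$ on $Q_{g,1}=L(C_g)+e_ue_u^{T}$. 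Its kernel is nonzero only when $m_{C_g}(\lambda)=2$, and in that case your Fredholm argument excludes $y_u\neq 0$. So $m_{Q_{g,1}}(\lambda)\le 1$.
\end{itemize}
This matching is not cosmetic. With your labeling as written, the statement would be false. Take $g=10$ with two hanging vertices and $\lambda=(3-\sqrt5)/2=2-2\cos(\pi/5)$; the multiplicity is $2$. Under the paper's definition this matrix is $Q_{10,3}$, which is consistent with the statement.
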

\begin{proof}
The proof is almost identical to that of Lemma 3.6, and interested readers may complete the proof themselves.
\end{proof}
\begin{proof}[Proof of Theorem 2.2]
We first prove the sufficiency. We begin by showing that the conclusion holds when \( c(G) + q(G) = 3 \).

\medskip
\noindent \textbf{Case 1.} \( c(G) = 1, q(G) = 2 \). The conclusion already holds.

\medskip
\noindent \textbf{Case 2.} \( c(G) = 2, q(G) = 1 \). Then \( G \cong G_4 \) as shown in Figure 6. Suppose that
\[
m_{Q_{g_1,h_1}}(\lambda) \le 1, \qquad m_{Q_{g_2,h_2}}(\lambda) \le 1.
\]
By the interlacing theorem, we have
\[
m_G(\lambda) \le m_{L_u(G)}(\lambda) - 1
= m_{Q_{g_1,h_1}}(\lambda) + m_{Q_{g_2,h_2}}(\lambda) + m_{B_{k_1,l_1}}(\lambda)
\le 3
< 2c(G) + q(G) - 1 = 4,
\]
a contradiction. Without loss of generality, assume \( m_{Q_{g_1,h_1}}(\lambda) = 2 \). Then by Lemma 3.4, we have \( m_{Q_{g_1,h_1-1}}(\lambda) = 1 \). Therefore, by Lemma 3.5, we obtain
\[
m_G(\lambda) = m_{Q_{g_1,h_1-1}}(\lambda) + m_{Q_{g_2,h_2}}(\lambda) + m_{B_{k_1,l_1}}(\lambda) = 4.
\]
Since \( m_{Q_{g_2,h_2}}(\lambda) \le 2 \) and \( m_{B_{k_1,l_1}}(\lambda) \le 1 \), we must have
\[
m_{Q_{g_2,h_2}}(\lambda) = 2, \qquad m_{B_{k_1,l_1}}(\lambda) = 1,
\]
and the conclusion follows.

\medskip
\noindent \textbf{Case 3.} \( c(G) = 3, q(G) = 0 \). Then \( G \cong G_5 \) as shown in Figure 6. The proof is analogous to that of Case 2.

\medskip
\noindent For the case \( c(G) + q(G) \ge 3 \), we proceed by induction on \( c(G) \). Using Lemma 3.8, the proof is entirely similar to that of Theorem 4.1, and is therefore omitted.

\medskip
\noindent \textbf{Necessity.} By the interlacing theorem, we have
\begin{align*}
m_G(\lambda) 
&\ge m_{L_{\Gamma(G)}}(\lambda) - |\Gamma(G)| \\
&= \sum_{1 \le m \le c(G)} m_{Q(g_m,h_m)}(\lambda) 
   + \sum_{1 \le i \le q(G)} m_{B(k_i,l_i)}(\lambda) 
   + \sum_{1 \le j \le |\Gamma(G)|-1} m_{U_{t_j}}(\lambda) - |\Gamma(G)| \\
&= 2c(G) + q(G) + |\Gamma(G)| - 1 - |\Gamma(G)| \\
&= 2c(G) + q(G) - 1.
\end{align*}
On the other hand, \( m_G(\lambda) \le 2c(G) + q(G) - 1 \). Therefore \( m_G(\lambda) = 2c(G) + q(G) - 1 \), which means that \( G \) is \( \lambda \)-optimal, and the conclusion follows.    
\end{proof}
\begin{proof}[Proof of Theorem 2.3]

The conclusion follows immediately from Lemma 4.2 and Theorem 2.2.
\end{proof}

\section{Acknowledgments}

We gratefully acknowledge the support of the Graduate Innovation Program of China university of Mining and Technology (2025WLKXJ146) for this work.

\end{document}